\documentclass[12pt,a4paper,UKenglish]{article}
\usepackage[margin=3cm]{geometry}
\usepackage{amsthm} 
\usepackage{amssymb}
\usepackage{amsopn}
\usepackage{color}
\usepackage{babel} 
\usepackage{hyperref}
\usepackage{amsmath, amsfonts, amssymb} 
\usepackage{graphicx}
\allowdisplaybreaks 
\usepackage[utf8]{inputenc}
\usepackage[numbers,sort&compress]{natbib}

\usepackage{tikz}
\usetikzlibrary{topaths,calc}

\usepackage{amssymb, amsmath}
\usepackage{verbatim}
\usepackage{hyperref}

\theoremstyle{plain}
\newtheorem{theorem}{Theorem}[section]
\newtheorem{lemma}[theorem]{Lemma}
\newtheorem{corollary}[theorem]{Corollary}
\newtheorem{proposition}[theorem]{Proposition}

\theoremstyle{definition}
\newtheorem{definition}[theorem]{Definition}
\newtheorem{example}[theorem]{Example}

\theoremstyle{remark}

\newtheorem{remark}[theorem]{Remark}

\usepackage[affil-it]{authblk}

\begin{document}
	\title{Normalized Laplace Operators for Hypergraphs with Real Coefficients}
	
	\author[1,2]{Jürgen Jost}
	\author[1,3,4]{Raffaella Mulas\footnote{Email address: r.mulas@soton.ac.uk}}
	\affil[1]{Max Planck Institute for Mathematics in the Sciences, Leipzig, Germany}
	\affil[2]{Santa Fe Institute for the Sciences of Complexity, New Mexico, USA}
	\affil[3]{The Alan Turing Institute, London, UK}
	\affil[4]{University of Southampton, Southampton, UK}
	\date{}

	\maketitle
	
	\begin{abstract}Chemical hypergraphs and their associated normalized Laplace operators are generalized and studied in the case where each vertex--hyperedge incidence has a real coefficient. We systematically study the effect of symmetries of a hypergraph on the spectrum of the Laplacian.
		\vspace{0.2cm}
		
		\noindent {\bf Keywords:} Chemical hypergraphs, Laplace operator, Spectral theory
	\end{abstract}
	\section{Introduction}
	The essential structure of many empirical systems consists in pairwise interactions between their elements, and such network structures  are therefore mathematically as (possibly weighted and/or directed) graphs. The theory of such networks is well developed, and it has proven successful in many applications, see for instance \cite{Estrada12}. Many other systems, however, naturally support higher order interactions. In particular, many social networks involve interactions between groups of varying size. Scientific collaboration networks, where data are readily available from preprint repositories, are a standard example. Also, chemical reactions typically involve several educts, several products and perhaps some catalysts. Such structures can no longer be modelled by graphs, but should rather be considered as hypergraphs.  In a hypergraph, a hyperedge can connect several vertices, and if we refine the structure a bit, we can in addition distinguish two classes of vertices, like the educts and products in a chemical reaction. While for a long time, hypergraphs have received less attention than ordinary graphs, this has recently changed, and the mathematical study of hypergraphs and its application to empirical systems has become very active, see for instance \cite{Battiston20}. As for the qualitative analysis of graphs (see for instance \cite{Banerjee08a}), the investigation of the spectrum of a Laplace operator can reveal important structural properties. In this contribution, we want to refine those tools, by looking at hypergraphs with real (and possibly negative) coefficients and studying the effects of symmetries on the spectrum. That is, we shall ask to what extent we can get indications of  the presence (or absence) of symmetries by looking at the spectrum of a hypergraph. 
	
	Let us now describe the setting of this paper in more concrete terms. In \cite{JM2019} we introduced \emph{chemical hypergraphs} with the aim of modelling chemical reaction networks. While classical hypergraphs are given by \emph{vertices} and sets of vertices called \emph{hyperedges}, chemical hypergraphs have the additional structure that each vertex has either a plus sign, or a minus sign, or both, for each hyperedge in which it is contained. The idea is that each hyperedge represents a chemical reaction involving the elements that it contains as vertices; plus signs are assigned to the \emph{educts} or \emph{inputs} of the reaction, minus signs are given to its \emph{products} or \emph{outputs}, and both signs are given to the \emph{catalysts} participating in the reaction. We also introduced two normalized Laplace operators for chemical hypergraphs, the \emph{vertex Laplacian} $L$ and the \emph{hyperedge Laplacian} $L^H$, as a generalization of the classical theory introduced by F. Chung \cite{Chung} for graphs. The spectra of these two operators, that only differ from each other by the multiplicity of the eigenvalue $0$, have been largely studied in \cite{Master-Stability,Sharp,MulasZhang,AndreottiMulas,pLaplacians1,spectralclasses,Independence,Symmetries}.
	
	Similarly, N. Reff and L. Rusnak \cite{ReffRusnak} introduced \emph{oriented hypergraphs}, i.e., hypergraphs in which each vertex in a hyperedge has either a plus or a minus sign, as well as a \emph{multiplicity}: a natural number assigned to the given vertex--hyperedge incidence. They also developed a spectral theory of oriented hypergraphs that focuses on the spectra of the (unnormalized) Laplace operator $\Delta$ and the adjacency matrix $A$. This theory has also been widely studied (see for instance \cite{ReffRusnak,hyp2013,hyp2014,hyp2015,hyp2016,hyp2017,hyp2018,hyp2018-2,hyp2019,hyp2019-2,hyp2019-3,hyp2020,hyp2020-2,Synchronization}) and, as pointed out in \cite[Remark 2.17]{MulasZhang}, for chemical hypergraphs without catalysts which are \emph{regular}, meaning that all vertices have the same degree, the spectra of $L$, $\Delta$ and $A$ are all equivalent, up to a multiplicative or additive constant. For non-regular chemical hypergraphs, however, the spectra of these operators are in general not equivalent, implying that they encode different properties of the hypergraph to which they are associated.
	
	Here we propose a further generalization of chemical hypergraphs and their associated operators, and we focus on the study of the normalized Laplacians. In particular, we consider hypergraphs in which each vertex--hyperedge incidence has a \emph{real coefficient}.\newline
	
	\textbf{Structure of the paper.} In Section \ref{Section Basic definitions} we establish the basic definitions for hypergraphs with real coefficients and their associated operators, as a natural generalization of the constructions in \cite{JM2019}. In Section \ref{Section First properties} we investigate the first properties of the generalized normalized Laplacians and in Section \ref{Section The eigenvalue 0} we discuss the multiplicity of the eigenvalue $0$. In Section \ref{Section min-max principle} we apply the min-max principle in order to characterize all the eigenvalues in terms of the Rayleigh Quotient, and in Section \ref{Section Largest eigenvalue} we investigate the largest eigenvalue. Finally, in Section \ref{Section:general} we give some general bounds on all eigenvalues; in Section \ref{Section Symmetries} and Section \ref{section: general symm} we study the effect of symmetries on the spectra of the generalized normalized Laplacians.
	
	\section{Basic definitions}\label{Section Basic definitions}
	\begin{definition}
		A \emph{hypergraph with real coefficients} is a triple $\Gamma=(V,H,\mathcal{C})$ such that:
		\begin{itemize}
			\item $V=\{v_1,\ldots,v_N\}$ is a finite set of \emph{nodes} or \emph{vertices};
			\item $H=\{h_1,\ldots,h_M\}$ is a multiset of elements $h_j\in \mathcal{P}(V)$ called \emph{hyperedges};
			\item $\mathcal{C}=\{C_{v,h}: v\in V\text{ and }h\in H \}$ is a set of \emph{coefficients} $C_{v,h}\in\mathbb{R}$ and it's such that
			\begin{equation}\label{eq:zerocoeff}
			C_{v,h}=0 \iff v\notin h.
			\end{equation}
		\end{itemize}
	\end{definition}
	\begin{remark}
		The chemical hypergraphs in \cite{JM2019} have coefficients $C_{v,h}\in \{-1,0,+1\}$ without the assumption \eqref{eq:zerocoeff}, while the oriented hypergraphs defined by Reff and Rusnak in \cite{ReffRusnak} can be seen as hypergraphs with coefficients $C_{v,h}\in \mathbb{Z}$. Moreover, a \emph{signed graph} is a hypergraph with real coefficients such that:
		\begin{itemize}
			\item[-] $H$ is a set (that is, $j\neq k$ implies $h_j\neq h_k$);
			\item[-] Each hyperedge contains exactly two vertices;
			\item[-] $C_{v,h}\in\{+1,-1\}$ for all $h\in H$ and $v\in h$.
		\end{itemize}A \emph{simple graph} is a signed graph such that, for each hyperedge $h$, there exists a unique $v\in h$ such that $C_{v,h}=1$ and there exists a unique $w\in h$ such that $C_{w,h}=-1$.
	\end{remark}
	
	\begin{remark}
		In the case when, for all $v$ and $h$, $C_{v,h}\geq 0$ and $\sum_{h\in H}C_{v,h}=1$, we can see each coefficient $C_{v,h}$ as the \emph{probability} of the vertex $v$ to belong to the hyperedge $h$. In the case of a hypergraph with integer coefficients, we can see each vertex as a chemical element, each hyperedge as a chemical reaction and each coefficient $C_{v,h}$ as the chemical \emph{stoichiometric coefficient} of the element $v$ in the reaction $h$.
	\end{remark}
	
	\begin{definition}
		We say that $\Gamma=(V,H,\mathcal{C})$ is \textit{connected} if, for every pair of vertices $v,w\in V$, there exists a path that connects $v$ and $w$, i.e. there exist $v_1,\dots,v_m\in V$ and $h_1,\dots,h_{m-1}\in H$ such that:
		\begin{itemize}
			\item $v_1=v$;
			\item $v_m=w$;
			\item $\{v_i,v_{i+1}\}\subseteq h_i$ for each $i=1,\dots,m-1$.
		\end{itemize}
	\end{definition}
	
	From here on, we fix a hypergraph with real coefficients $\Gamma=(V,H,\mathcal{C})$ on $N$ vertices $v_1,\ldots,v_N$ and $M$ hyperedges $h_1,\ldots,h_M$. For simplicity, assume that $\Gamma$ has no isolated vertices, i.e. we assume that each vertex is contained in at least one hyperedge, and we assume that $\Gamma$ is connected.
	
	Changing the orientation of a hyperedge $h$ means substituting every $C_{v,h}$ by $-C_{v,h}$. We denote the two orientations by $(h,+)$ and $(h,-)$. Analogously to differential forms in Riemannian geometry \cite{JGeom} and to the case of chemical hypergraphs \cite{JM2019}, we shall consider functions $\gamma:H\rightarrow\mathbb{R}$ that satisfy
	\begin{equation}
	\label{or}
	\gamma (h,-)=-\gamma (h,+),
	\end{equation}
	that is, changing the orientation of $h$ produces a minus sign.
	\begin{definition}
		Given $h\in H$, its \emph{cardinality}, denoted $|h|$, is the number of vertices that are contained in $h$.
	\end{definition}
	\begin{definition}
		Given $v\in V$, its \emph{degree} is
		\begin{equation}\label{eq:defdegree}
		\deg v:=\sum_{h\in H}(C_{v,h})^2.
		\end{equation}
		The $N\times N$ diagonal \emph{degree matrix} of $\Gamma$ is
		\begin{equation*}
		D:=\textrm{diag}\bigl(\deg v_i\bigr)_{i=1,\ldots,N}.
		\end{equation*}
	\end{definition}
	\begin{remark}In literature, there are various ways of defining the degree of a vertex in a hypergraph. Our definition of vertex degree coincides with the one in \cite{Sharp} for chemical hypergraphs and it coincides, in particular, with the classical definition of degree in the case of graphs. Since it is natural to choose a vertex degree that is always non-negative, one could as well consider, for instance, $\sum_{h\in H}|C_{v,h}|$ as degree of a vertex $v$. However, here we choose to define the degree of $v$ as in \eqref{eq:defdegree} in order to be able, later on, to write the vertex normalized Laplacian in a matrix formulation that coincides with the one that is commonly used in literature for graphs (cf. Proposition \ref{prop:matrix} below).
		
		Observe also that, since the coefficients $C_{v,h}$ are non-zero provided $v\in h$ and since we are assuming that there are no isolated vertices, there are no vertices of degree $0$. In particular, $D$ is an invertible matrix.
	\end{remark}
	\begin{definition}
		The $N\times N$ \emph{adjacency matrix} of $\Gamma$ is $A:=(A_{ij})_{ij}$, where $A_{ii}:=0$ for all $i=1,\ldots,N$ and
		\begin{equation*}
		A_{ij}:=-\sum_{h\in H}C_{v_i,h}\cdot C_{v_j,h}\quad \text{for all }i\neq j.
		\end{equation*}
	\end{definition}
	\begin{remark}
		In the case of simple graphs, $A$ coincides with the classical adjacency matrix that has $(0,1)$--entries and it's such that $A_{i,j}=1$ if and only if $v_i$ and $v_j$ are joined by an edge.
	\end{remark}
	
	\begin{definition}
		The $N\times M$ \emph{incidence matrix} of $\Gamma$ is $\mathcal{I}:=(\mathcal{I}_{ij})_{ij}$, where
		\begin{equation*} 
		\mathcal{I}_{ij}:=C_{v_i,h_j}.
		\end{equation*}
	\end{definition}
	\begin{remark}
		Each row $\mathcal{I}_i$ of $\mathcal{I}$ represents a vertex $v_i$ and each column $\mathcal{I}^j$ of $\mathcal{I}$ represents a hyperedge $h_j$.
	\end{remark}
	\begin{definition}
		Given $J\subseteq\{1,\ldots,M\}$, we say that the hyperedges $\{h_j\}_{j\in J}$ are \emph{linearly independent} if the corresponding columns in the incidence matrix are linearly independent, that is, if $\{\mathcal{I}^j\}_{j\in J}$ are linearly independent.
	\end{definition}
	\begin{remark}
		Linear dependence of hyperedges means the following: we see each hyperedge as the weighted sum of all its vertices, where the weights are the coefficients. If a hyperedge can be written as a linear combination of the other ones, we talk about linear dependence.
	\end{remark}

	\begin{definition}
		Given $\omega,\gamma:H\rightarrow\mathbb{R}$, their \emph{scalar product} is
		\begin{equation*}
		(\omega,\gamma)_H:=\sum_{h\in H}\omega(h)\cdot\gamma(h).
		\end{equation*}
	\end{definition}
	\begin{definition}
		Given $f,g:V\rightarrow\mathbb{R}$, their \emph{scalar product} is
		\begin{equation*}
		(f,g)_V:=\sum_{v\in V}\deg v\cdot f(v)\cdot g(v).
		\end{equation*}
	\end{definition}
	\begin{remark}
		The scalar products defined above are both positive definite.
	\end{remark}
	\begin{definition}
		Given $f:V\rightarrow\mathbb{R}$ and $h\in H$, its \emph{boundary operator} is
		\begin{equation*}
		\delta f(h):=\sum_{v\in V}C_{v,h}\cdot f(v).
		\end{equation*}\end{definition}
	\begin{remark}
		Observe that \begin{equation*}
		\delta:\{f:V\rightarrow\mathbb{R}\}\longrightarrow\{\gamma:H\rightarrow\mathbb{R}\}
		\end{equation*}
		and, for each $f:V\rightarrow\mathbb{R}$, $\delta f$ satisfies \eqref{or}.
	\end{remark}
	\begin{definition}
		Let 	\begin{equation*}
		\delta^*:\{\gamma:H\rightarrow\mathbb{R}\}\longrightarrow\{f:V\rightarrow\mathbb{R}\}
		\end{equation*}be defined by
		\begin{equation*}
		\delta^*(\gamma)(v):=\frac{\sum_{h\in H}C_{v,h}\cdot\gamma(h)}{\deg v}.
		\end{equation*}
	\end{definition}
	\begin{lemma}$\delta^*$ is such that $(\delta f,\gamma)_H=(f,\delta^*\gamma)_V$, therefore it is the (unique) adjoint operator of $\delta$.
	\end{lemma}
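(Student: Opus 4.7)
The proof is a direct computation together with a standard uniqueness argument, so the plan breaks into two essentially independent pieces.

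First I would verify the identity $(\delta f,\gamma)_H=(f,\delta^*\gamma)_V$ by unfolding both sides against the definitions. On the left, $(\delta f,\gamma)_H=\sum_{h\in H}\delta f(h)\gamma(h)=\sum_{h\in H}\bigl(\sum_{v\in V}C_{v,h}f(v)\bigr)\gamma(h)$. On the right, using the degree weighting in $(\cdot,\cdot)_V$, I get $(f,\delta^*\gamma)_V=\sum_{v\in V}\deg v\cdot f(v)\cdot\frac{\sum_{h\in H}C_{v,h}\gamma(h)}{\deg v}$, and the crucial point is that the factor $\deg v$ in the outer sum cancels exactly against the $\deg v$ in the denominator of $\delta^*\gamma(v)$; this cancellation is permitted because $\deg v\neq 0$ (no isolated vertices, as assumed in the paper). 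After the cancellation, both expressions reduce to the double sum $\sum_{v\in V}\sum_{h\in H}C_{v,h}f(v)\gamma(h)$, and equality follows by swapping the order of summation. This is really the whole content of why the definition of $\delta^*$ has the factor $1/\deg v$ in it.

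For uniqueness, I would invoke the remark that $(\cdot,\cdot)_V$ and $(\cdot,\cdot)_H$ are positive definite. If $T_1,T_2:\{\gamma:H\to\mathbb{R}\}\to\{f:V\to\mathbb{R}\}$ both satisfied $(\delta f,\gamma)_H=(f,T_i\gamma)_V$ for all $f,\gamma$, then $(f,(T_1-T_2)\gamma)_V=0$ for all $f$, and taking $f=(T_1-T_2)\gamma$ together with positive definiteness forces $(T_1-T_2)\gamma=0$ for every $\gamma$, whence $T_1=T_2$.

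There is no real obstacle here: the definition of $\delta^*$ was engineered precisely to absorb the degree weights appearing in $(\cdot,\cdot)_V$, so the only thing to check is that the symbolic manipulation goes through, and the only hypothesis one must remember to use is that $\deg v>0$ for all $v$, which is guaranteed by the standing assumption that $\Gamma$ has no isolated vertices.
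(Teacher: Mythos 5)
Your proposal is correct and follows essentially the same route as the paper: unfold both sides, swap the order of summation, and let the factor $\deg v$ (nonzero by the no-isolated-vertices assumption) cancel against the denominator in $\delta^*$. The only difference is that you also spell out the uniqueness argument via positive definiteness of $(\cdot,\cdot)_V$, which the paper asserts but does not prove; that addition is correct and harmless.
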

	\begin{proof}
		\begin{align*}
		(\delta f,\gamma)_H&=\sum_{h\in H}\gamma(h)\cdot\biggl(\sum_{v\in V}C_{v,h}\cdot f(v)\biggr)\\
		&=\sum_{v\in V}f(v)\cdot\biggl(\sum_{h\in H}C_{v,h}\cdot\gamma(h)\biggr)\\
		&=\sum_{v\in V}\deg v\cdot f(v)\cdot\frac{\biggl(\sum_{h\in H}C_{v,h}\cdot\gamma(h)\biggr)}{\deg v}\\
		&=\sum_{v\in V}\deg v\cdot f(v)\cdot\delta^*(\gamma)(v)\\
		&=(f,\delta^*\gamma)_V.
		\end{align*}
	\end{proof}
	\begin{definition}
		Given $f:V\rightarrow\mathbb{R}$ and given $v\in V$, let
		\begin{align*}
		Lf(v):&=\delta^*(\delta f)(v)\\
		&=\frac{\sum_{h\in H}C_{v,h}\cdot\delta f(h)}{\deg v}\\
		&=\frac{\sum_{h\in H}C_{v,h}\cdot\biggl(\sum_{v'\in V}C_{v',h}\cdot f(v')\biggr)}{\deg v}\\
		&=\frac{1}{\deg v}\cdot \biggl(\sum_{h\in H}\sum_{v'\in V}C_{v,h}\cdot C_{v',h}\cdot f(v')\biggr)\\
		&=f(v)+\frac{1}{\deg v}\cdot \biggl(\sum_{h\in H}\sum_{v'\in V\setminus\{v\}}C_{v,h}\cdot C_{v',h}\cdot f(v')\biggr).
		\end{align*}Analogously, given $\gamma:H\rightarrow\mathbb{R}$ and $h\in H$, let
		\begin{align*}
		L^H\gamma(h):&=\delta(\delta^* \gamma)(h)\\
		&=\sum_{v\in V}C_{v,h}\cdot \delta^* \gamma(v)\\
		&=\sum_{v\in V}C_{v,h}\cdot \biggl(\frac{\sum_{h'\in H}C_{v,h'}\cdot\gamma(h')}{\deg v}\biggr)\\
		&=\sum_{v\in V}\sum_{h'\in H}\frac{1}{\deg v}\cdot C_{v,h}\cdot C_{v,h'}\cdot \gamma(h').
		\end{align*}We call $L$ and $L^H$ the \emph{vertex normalized Laplacian} and the \emph{hyperedge normalized Laplacian}, respectively. They coincide with the ones in \cite{JM2019} in the case when the coefficients are in $\{-1,0,+1\}$.
	\end{definition}
	
	\begin{remark}
		Note that the operators $L$ and $L^H$ do not change if we change the signs of all coefficients $C_{v,h}$ for any hyperedge $h$ and all vertices $v$ in that hyperedge. Moreover, the operator $L^H$ doesn't change if, for each $v\in V$ and each $h\in H$, we replace $C_{v,h}$ by $C_{v,h}\cdot a(v)$, where $a(v)$ is any real coefficient depending only on $v$.
	\end{remark}
	\section{First properties}\label{Section First properties}
	We first observe that, as in the case of chemical hypergraphs \cite{JM2019},
	\begin{itemize}
		\item $L$ and $L^H$ are the two compositions of $\delta$ and $\delta^*$, which are adjoint to each other. Therefore they are both self-adjoint, which implies that their eigenvalues are real.
		\item $L$ and $L^H$ are non-negative operators. In fact, given $f:V\rightarrow\mathbb{R}$,
		\begin{equation}\label{pos1}
		\langle Lf,f\rangle=\langle \delta^*\delta f,f\rangle=\langle \delta f,\delta f\rangle_E\geq 0.
		\end{equation}Analogously, for $\gamma:E\rightarrow\mathbb{R}$,
		\begin{equation}\label{pos2}
		\langle L^H\gamma,\gamma\rangle_E=\langle\delta\delta^* \gamma,\gamma\rangle_E=\langle\delta^* \gamma,\delta^*\gamma\rangle\geq 0.
		\end{equation}This implies that the eigenvalues of $L$ and $L^H$ are non-negative.
		\item Since $L$ and $L^H$ are the two compositions of two linear operators, the non-zero eigenvalues of $L$ and $L^H$ are the same. In particular, if $f$ is an eigenfunction of $L$ with eigenvalue $\lambda \neq 0$, then $\delta f$ is an eigenfunction of $L^H$ with eigenvalue $\lambda$; if $\gamma$ is an eigenfunction of $L^H$ with eigenvalue $\lambda'\neq 0$, then $\delta^* \gamma$ is an eigenfunction of $L$ with eigenvalue $\lambda'$.
		
		This also implies that the two operators only differ in the multiplicity of the eigenvalue $0$. Let $m_V$ and $m_H$ be the multiplicity of the eigenvalue $0$ of $L$ and $L^H$, respectively. Then,
		\begin{equation*}
		m_V-m_H= N-M.
		\end{equation*}
	\end{itemize}
	
	Now, observe that we can see a function $f:V\rightarrow\mathbb{R}$ as a vector $(f_1,\ldots,f_N)\in\mathbb{R}^N$ such that $f_i=f(v_i)$ and, analogously, we can see a function $\gamma:H\rightarrow\mathbb{R}$ as a vector $(\gamma_1,\ldots,\gamma_M)\in\mathbb{R}^M$ such that $\gamma_k=\gamma(h_k)$. In view of this observation, we can write $L$ and $L^H$ in a matrix form.\newline
	
	In the following, we denote by $\mathrm{Id}$ the $N\times N$ identity matrix.
	
	\begin{proposition}\label{prop:matrix}
		The normalized Laplacians can be rewritten in matrix form as\begin{equation*}
		L=\mathrm{Id}-D^{-1}A \qquad \text{and}\qquad  L^H=\mathcal{I}^\top D^{-1}\mathcal{I}.
		\end{equation*}
	\end{proposition}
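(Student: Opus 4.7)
The plan is to carry out a direct matrix-entry computation from the explicit formulas for $Lf(v)$ and $L^H\gamma(h)$ derived in the previous definition, viewing $f$ and $\gamma$ as column vectors as the preceding paragraph suggests. The only ingredients are the definitions of $D$, $A$ and $\mathcal{I}$, together with the identity $\deg v = \sum_h C_{v,h}^2$.

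For the vertex Laplacian, I would read off from $Lf(v_i)=\frac{1}{\deg v_i}\sum_{h}\sum_{j}C_{v_i,h}C_{v_j,h}f(v_j)$ that the $(i,j)$-entry of the matrix $L$ is
\[
L_{ij}=\frac{1}{\deg v_i}\sum_{h\in H}C_{v_i,h}\,C_{v_j,h}.
\]
I would then split into diagonal and off-diagonal cases. On the diagonal, the inner sum is exactly $\deg v_i$ by the definition \eqref{eq:defdegree}, so $L_{ii}=1$. Off the diagonal, the inner sum is $-A_{ij}$ by the definition of the adjacency matrix, so $L_{ij}=-A_{ij}/\deg v_i=(D^{-1})_{ii}\cdot(-A)_{ij}=(D^{-1}(-A))_{ij}$. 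Combining diagonal and off-diagonal contributions gives $L=\mathrm{Id}-D^{-1}A$.

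For the hyperedge Laplacian, I would start from $L^H\gamma(h_k)=\sum_{v\in V}\sum_{h_l\in H}\frac{1}{\deg v}C_{v,h_k}C_{v,h_l}\gamma(h_l)$ and read off
\[
(L^H)_{kl}=\sum_{v\in V}\mathcal{I}_{v k}\cdot\frac{1}{\deg v}\cdot\mathcal{I}_{v l}.
\]
Since $D^{-1}$ is the diagonal matrix with entries $1/\deg v$, the right-hand side is exactly the $(k,l)$-entry of $\mathcal{I}^\top D^{-1}\mathcal{I}$, giving the second identity.

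There is no genuine obstacle: both statements are unpacking-of-definitions arguments, and the only place where one must be slightly careful is matching the index conventions (rows of $\mathcal{I}$ indexed by vertices, columns by hyperedges, so that $\mathcal{I}^\top D^{-1}\mathcal{I}$ is indeed the right $M\times M$ product, and $D^{-1}A$ multiplies rows of $A$ by $1/\deg v_i$ as required to produce $L_{ij}$).
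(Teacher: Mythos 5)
Your proposal is correct and takes essentially the same route as the paper: a direct entry-wise unpacking of the definitions of $D$, $A$ and $\mathcal{I}$, matching the matrix entries against the explicit formulas for $Lf(v_i)$ and $L^H\gamma(h_k)$ (the paper merely runs the computation in the opposite direction, starting from the matrices and recovering the operators). The diagonal/off-diagonal split using $\deg v_i=\sum_h C_{v_i,h}^2$ and $A_{ii}=0$ is exactly the content of the paper's verification.
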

	\begin{proof}
		Observe that the $(ij)$--entry of the $N\times N$ matrix $\mathrm{Id}-D^{-1}A$ is
		\begin{equation*}
		\bigr(\mathrm{Id}-D^{-1}A)_{ij}=\begin{cases}1 & \text{if }i=j\\
		-\frac{A_{ij}}{\deg v_i} &\text{if }i\neq j.
		\end{cases}
		\end{equation*}Therefore, given $f:V\rightarrow\mathbb{R}$ that can be seen as a vector $f=(f_1,\ldots,f_N)\in\mathbb{R}^N$,
		\begin{align*}
		\biggr(\bigr(\mathrm{Id}-D^{-1}A\bigl)\cdot f\biggl)_i&=f_i-\frac{1}{\deg v_i}\cdot\sum_{j\neq i,j=1}^N A_{ij}\cdot f_j\\
		&=f(v_i)+\frac{1}{\deg v_i}\cdot \biggl(\sum_{h\in H}\sum_{v_j\in V\setminus\{v_i\}}C_{v_i,h}\cdot C_{v_j,h}\cdot f(v_j)\biggr) \\
		&=Lf(v_i).
		\end{align*}

		Similarly, the $(kl)$--entry of the $M\times M$ matrix $\mathcal{I}^\top D^{-1}\mathcal{I}$ is
		\begin{equation*}
		\bigr(\mathcal{I}^\top D^{-1}\mathcal{I}\bigl)_{kl}=\sum_{i=1}^N\frac{1}{\deg v_i}\cdot \mathcal{I}_{ik}\cdot \mathcal{I}_{il}.
		\end{equation*}Therefore, given $\gamma:H\rightarrow\mathbb{R}$ that can be seen as a vector $\gamma=(\gamma_1,\ldots,\gamma_M)\in\mathbb{R}^M$,
		\begin{align*}
		\biggr(\bigr(\mathcal{I}^\top D^{-1}\mathcal{I}\bigl)\cdot \gamma\biggl)_k&=\sum_{i=1}^N\sum_{l=1}^M\frac{1}{\deg v_i}\cdot \mathcal{I}_{ik}\mathcal{I}_{il}\cdot \gamma_l\\
		&=\sum_{v_i\in V}\sum_{h_l\in H}\frac{1}{\deg v_i}\cdot C_{v_i,h_k}\cdot C_{v_i,h_l}\cdot \gamma(h_l)\\
		&=L^H\gamma(h_k).
		\end{align*}
		
	\end{proof}
	\begin{corollary}
		The sum of the eigenvalues of $L$ (and $L^H$) equals $N$.
	\end{corollary}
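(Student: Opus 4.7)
The plan is to exploit the matrix expressions from Proposition \ref{prop:matrix} and recall that the sum of eigenvalues of a square matrix equals its trace. So the whole task reduces to computing $\mathrm{tr}(L)$ and $\mathrm{tr}(L^H)$.

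For $L = \mathrm{Id} - D^{-1}A$, I would simply read off the diagonal. By definition $A_{ii}=0$ for all $i$, and since $D^{-1}$ is diagonal, the product $D^{-1}A$ again has zero diagonal. Hence every diagonal entry of $L$ equals $1$, giving $\mathrm{tr}(L)=N$ immediately.

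For $L^H = \mathcal{I}^\top D^{-1}\mathcal{I}$, the cleanest route is the cyclic property of the trace:
\begin{equation*}
\mathrm{tr}(L^H)=\mathrm{tr}(\mathcal{I}^\top D^{-1}\mathcal{I})=\mathrm{tr}(D^{-1}\mathcal{I}\mathcal{I}^\top)=\sum_{i=1}^N \frac{1}{\deg v_i}\,(\mathcal{I}\mathcal{I}^\top)_{ii},
\end{equation*}
since $D^{-1}$ is diagonal. A direct computation gives $(\mathcal{I}\mathcal{I}^\top)_{ii}=\sum_{j=1}^M \mathcal{I}_{ij}^2=\sum_{h\in H}(C_{v_i,h})^2=\deg v_i$, so the sum telescopes to $N$. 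Alternatively, one may avoid this calculation entirely by invoking the bullet point observed just before Proposition \ref{prop:matrix}: the operators $L$ and $L^H$ share the same nonzero eigenvalues with multiplicities, so they have identical traces (the eigenvalue $0$ contributes nothing to the sum regardless of multiplicity), and the claim for $L^H$ follows from the one for $L$.

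There is really no hard step here; the only thing to notice is that $A$ has zero diagonal (which makes the $L$-computation one line), and that the cyclicity of trace turns the $L^H$-computation into exactly the definition of the vertex degrees. The statement is essentially a sanity check on the normalization chosen in \eqref{eq:defdegree}.
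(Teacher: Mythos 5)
Your proof is correct and follows essentially the same route as the paper: invoke Proposition \ref{prop:matrix}, use that the sum of the eigenvalues equals the trace, and observe that $\mathrm{tr}(L)=N$ since $A$ has zero diagonal. The extra detail you supply for $\mathrm{tr}(L^H)$ (via cyclicity of the trace, or via the shared nonzero spectrum) is a welcome elaboration of a step the paper leaves implicit, but it is not a different argument.
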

	\begin{proof}
		It follows from Proposition \ref{prop:matrix}, since the sum of a matrix's eigenvalues equals its trace, and the trace of $L$ is $N$. 
	\end{proof}
	From here on, we will arrange the $N$ eigenvalues of $L$ as
	\begin{equation*}
	0\leq \lambda_1\leq \ldots \leq \lambda_N\leq N
	\end{equation*}and we will arrange the $M$ eigenvalues of $L^H$ as
	\begin{equation*}
	0\leq \lambda^H_1\leq \ldots \leq \lambda^H_M\leq N.
	\end{equation*}We say that the eigenvalues of $L$ are the \emph{spectrum} of $\Gamma$.
	\begin{remark}
		Assuming the connectivity of $\Gamma$ is not restrictive. In fact, it is clear by definition of $L$ that the spectrum of a disconnected hypergraph is equal to the union of the spectra of its connected components.
	\end{remark}
	
	\section{The eigenvalue 0}\label{Section The eigenvalue 0}
	By \eqref{pos1}, a function $f$ on the vertex set satisfies $L f=0$  if and only if, for every $h\in H$,
	\begin{equation}\label{eigenf0}\sum_{v\in V}C_{v,h}\cdot f(v)=0.\end{equation}
	Thus, to create an eigenvalue $0$ of $L$, we need a function  $f:V\rightarrow \mathbb{R}$ such that is not identically $0$ and satisfies \eqref{eigenf0}.
	
	Similarly, by \eqref{pos2}, in order to get an eigenvalue $0$ of $L^H$, we need $\gamma: H \to \mathbb{R}$ that is not identically $0$, satisfying \eqref{or} and
	\begin{equation}\label{mult0LH}
	\sum_{h\in H}C_{v,h}\cdot\gamma(h)=0
	\end{equation}
	for every vertex $v$.\newline
	
	The following results generalize the ones in \cite[Section 5]{JM2019}.
	
	\begin{proposition}\label{propker}
		\begin{equation*}
		\dim(\ker \mathcal{I})=m_H \qquad \text{ and }\qquad\dim(\ker \mathcal{I}^\top)=m_V.
		\end{equation*}
	\end{proposition}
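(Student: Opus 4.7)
The plan is to use the matrix representation of the two Laplacians from Proposition \ref{prop:matrix} and reduce the kernel computation to a standard PSD argument. Recall that the boundary operator $\delta$ acts on vectors as multiplication by $\mathcal{I}^\top$, while $\delta^*$ is given by $D^{-1}\mathcal{I}$. Thus
\[
L = \delta^*\delta = D^{-1}\mathcal{I}\mathcal{I}^\top \qquad\text{and}\qquad L^H = \delta\delta^* = \mathcal{I}^\top D^{-1}\mathcal{I},
\]
and I want to show that $\ker L = \ker \mathcal{I}^\top$ and $\ker L^H = \ker \mathcal{I}$, since those two identities immediately deliver the claimed dimension equalities.

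For the hyperedge side, I would start with $L^H\gamma = 0$. The inclusion $\ker\mathcal{I}\subseteq\ker L^H$ is clear from the factorization. For the converse, pair $L^H\gamma=0$ with $\gamma$ under the scalar product $(\cdot,\cdot)_H$ to get
\[
0 = (L^H\gamma,\gamma)_H = (\delta\delta^*\gamma,\gamma)_H = (\delta^*\gamma,\delta^*\gamma)_V,
\]
which by positive definiteness of $(\cdot,\cdot)_V$ forces $\delta^*\gamma=0$, i.e.\ $D^{-1}\mathcal{I}\gamma=0$; since $D$ is invertible (no isolated vertices), this gives $\mathcal{I}\gamma=0$. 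Hence $\ker L^H=\ker\mathcal{I}$ and $m_H = \dim\ker\mathcal{I}$.

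For the vertex side the argument is symmetric: $\ker\mathcal{I}^\top\subseteq\ker L$ follows from $L = D^{-1}\mathcal{I}\mathcal{I}^\top$, and conversely if $Lf=0$ then
\[
0 = (Lf,f)_V = (\delta^*\delta f,f)_V = (\delta f,\delta f)_H,
\]
so $\delta f=0$, i.e.\ $\mathcal{I}^\top f=0$. This gives $\ker L=\ker\mathcal{I}^\top$ and $m_V = \dim\ker\mathcal{I}^\top$.

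No real obstacle is expected; the only subtlety worth being careful about is using the weighted inner products $(\cdot,\cdot)_V$ and $(\cdot,\cdot)_H$ (with respect to which $\delta$ and $\delta^*$ are adjoint) rather than the standard Euclidean one, so that the identities $(\delta^*\delta f,f)_V = (\delta f,\delta f)_H$ and $(\delta\delta^*\gamma,\gamma)_H = (\delta^*\gamma,\delta^*\gamma)_V$ go through cleanly and positive definiteness of both scalar products (noted just before the definition of $\delta$) yields the required implications.
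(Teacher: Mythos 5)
Your proof is correct and follows essentially the same route as the paper: the paper also reduces the statement to the equivalence $L^H\gamma=0\iff\mathcal{I}\gamma=0$ (and its transpose analogue), using the characterization of $0$-eigenfunctions via $(L^H\gamma,\gamma)_H=(\delta^*\gamma,\delta^*\gamma)_V$ established at the start of Section \ref{Section The eigenvalue 0}; you simply carry out that positive-semidefiniteness argument inline rather than citing it. The only point worth making explicit in either version is that self-adjointness of $L$ and $L^H$ with respect to the relevant scalar products guarantees that the multiplicities $m_V,m_H$ coincide with the kernel dimensions.
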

	\begin{proof}
		Fix a function $\gamma:H\rightarrow\mathbb{R}$ that we can see as a vector $(\gamma_1,\ldots,\gamma_M)\in\mathbb{R}^M$. Then,
		\begin{align*}
		\mathcal{I}\cdot\begin{pmatrix} \gamma_1 \\ \vdots \\ \gamma_M \end{pmatrix}=\mathbf{0} &\iff \sum_{j=1}^M\mathcal{I}_{ij}\cdot\gamma_j=0\qquad \forall i\in\{1,\ldots,N\}\\
		&\iff \sum_{j=1}^M\ C_{v_i,h_j}\cdot\gamma(h_j)=0\qquad \forall i\in\{1,\ldots,N\}\\
		&\iff \gamma \text{ is an eigenfunction of $L^H$ with eigenvalue }0.
		\end{align*}Therefore $\dim(\ker \mathcal{I})=m_H$ and  the proof of the second equality is analogous.
	\end{proof}

	\begin{corollary}\label{cormH}
		\begin{equation*}
		m_H=M-\text{maximum number of linearly independent hyperedges}
		\end{equation*}and
		\begin{equation*}
		m_V=N-\text{maximum number of linearly independent hyperedges}.
		\end{equation*}
	\end{corollary}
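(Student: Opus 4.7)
The plan is to combine Proposition \ref{propker} with the rank--nullity theorem applied to the incidence matrix $\mathcal{I}$ and its transpose. By the very definition given earlier in the paper, the maximum number of linearly independent hyperedges is the maximal size of a linearly independent set among the columns $\{\mathcal{I}^1,\ldots,\mathcal{I}^M\}$ of $\mathcal{I}$; that is, this quantity equals $\mathrm{rank}(\mathcal{I})$.

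First, I would apply rank--nullity to $\mathcal{I}\colon\mathbb{R}^M\to\mathbb{R}^N$ to obtain $M=\mathrm{rank}(\mathcal{I})+\dim(\ker\mathcal{I})$. Using Proposition \ref{propker}, which identifies $\dim(\ker\mathcal{I})$ with $m_H$, and rewriting $\mathrm{rank}(\mathcal{I})$ as the maximum number of linearly independent hyperedges, this immediately yields the first displayed equality.

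For the second equality, I would do the analogous computation for $\mathcal{I}^\top\colon\mathbb{R}^N\to\mathbb{R}^M$, which gives $N=\mathrm{rank}(\mathcal{I}^\top)+\dim(\ker\mathcal{I}^\top)$. The only extra ingredient is the equality of row rank and column rank, $\mathrm{rank}(\mathcal{I}^\top)=\mathrm{rank}(\mathcal{I})$, so that the rank term is again the maximum number of linearly independent hyperedges; combining with $\dim(\ker\mathcal{I}^\top)=m_V$ from Proposition \ref{propker} finishes the proof.

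There is really no substantive obstacle here: the statement is a bookkeeping corollary of Proposition \ref{propker} together with standard linear algebra. The only point worth being careful about is matching the terminology, namely that ``maximum number of linearly independent hyperedges'' has been defined precisely as the column rank of $\mathcal{I}$, so no further argument is needed to equate it with $\mathrm{rank}(\mathcal{I})$ (or with $\mathrm{rank}(\mathcal{I}^\top)$).
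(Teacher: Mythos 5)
Your proposal is correct and follows exactly the paper's own argument, which simply cites Proposition \ref{propker} together with the Rank--Nullity Theorem; you have merely spelled out the details (column rank equals the maximum number of linearly independent hyperedges by definition, and row rank equals column rank for the $m_V$ case). Nothing further is needed.
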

	\begin{proof}
		It follows by Proposition \ref{propker} and by the Rank-Nullity Theorem.
	\end{proof}
	\begin{remark}
		In the case when the coefficients are integers, the equation
		\begin{equation}\label{balancingequation}
		\mathcal{I}\cdot\begin{pmatrix} \gamma_1 \\ \vdots \\ \gamma_M \end{pmatrix}=\mathbf{0}
		\end{equation}for the eigenfunctions $\gamma$ of $L^H$ that have eigenvalue $0$, coincides with the \emph{metabolite balancing equation} \cite{steady-state}. In the setting of metabolic pathway analysis, the $v_i$'s represent metabolites, the $h_j$'s represent metabolic reactions, and a vector $\gamma=(\gamma_1,\ldots,\gamma_M)$ is a \emph{flux distribution}. A solution of \eqref{balancingequation} represents a balance between the consumed metabolites and produced metabolites.
		
		Furthermore, in the metabolic pathway  analysis, an \emph{elementary flux mode} (EFM) is defined by Equation (\ref{balancingequation}) together with the following two conditions:
		\begin{enumerate}
			\item \emph{Feasibility}: $\gamma(h)\geq 0$ if the reaction represented by $h\in H$ is irreversible. In our case, this condition is naturally satisfied because, by considering \emph{undirected} hypergraphs, we are representing only \emph{reversible} reactions (at least from the theoretical point of view) as we are considering both orientations for every hyperedge.
			\item \emph{Non-decomposability}: there is no non-zero vector $(\gamma'_1,\ldots,\gamma'_M)\in\mathbb{R}^M$ satisfying Equation (\ref{balancingequation}) and the Feasibility condition such that 
			\begin{equation*}
			P(\gamma):=\{h\in H: \gamma(h)\neq 0\}\supset P(\gamma').
			\end{equation*}This condition is also called \emph{genetic independence} and, in our case, it corresponds to a specific choice of a basis for the kernel of $\mathcal{I}$.
		\end{enumerate}
	\end{remark}
	\section{Min-max principle}\label{Section min-max principle}
	We can apply the Courant-Fischer-Weyl min-max principle in order to characterize all the eigenvalues of $L$ and $L^H$.
	
	\begin{theorem}[Courant--Fischer--Weyl min-max principle]
		Let $W$ be an $n$-dimensional vector space with a positive definite scalar product $(.,.)$. Let $\mathcal{W}_k$ be the family of all $k$-dimensional subspaces of $W$. Let $B : W \rightarrow W$ be a self-adjoint linear operator. Then the eigenvalues $\mu_1\leq\ldots\leq \mu_n$ of $B$ can be obtained by
		\begin{equation*}
		\mu_k=\min_{W_{k}\in\mathcal{W}_{k}}\max_{g(\neq 0)\in W_{k}}\frac{(Bg,g)}{(g,g)}=\max_{W_{n-k+1}\in\mathcal{W}_{n-k+1}}\min_{g(\neq 0)\in W_{n-k+1}}\frac{(Bg,g)}{(g,g)}.
		\end{equation*}
		The vectors $g_k$ realizing such a min-max or max-min then are corresponding eigenvectors, and the min-max spaces $\mathcal{W}_k$ are spanned by the eigenvectors for the eigenvalues $\mu_1,\ldots,\mu_k$, and analogously, the max-min spaces $\mathcal{W}_{n-k+1}$ are spanned by the eigenvectors for the eigenvalues $\mu_k,\ldots,\mu_n$. Thus, we also have
		\begin{equation}\label{eqminmax}
		\mu_k=\min_{g\in W,(g,g_j)=0\text{ for }j=1,\ldots,k-1}\frac{(Bg,g)}{(g,g)}=\max_{g\in V,(g,g_l)=0\text{ for }l=k+1,\ldots,n}\frac{(Bg,g)}{(g,g)}.
		\end{equation}
		In particular,
		\begin{equation*}
		\mu_1=\min_{g\in W}\frac{(Bg,g)}{(g,g)},\qquad \mu_n=\max_{g\in W}\frac{(Bg,g)}{(g,g)}.
		\end{equation*}
	\end{theorem}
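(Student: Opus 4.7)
My plan is to invoke the spectral theorem for self-adjoint operators on a finite-dimensional real inner product space, which provides an orthonormal basis $g_1,\ldots,g_n$ of eigenvectors with $Bg_i=\mu_i g_i$ and $\mu_1\leq\cdots\leq\mu_n$. Writing any $g=\sum_i c_i g_i$, the Rayleigh quotient becomes $(Bg,g)/(g,g)=\sum_i \mu_i c_i^2/\sum_i c_i^2$, a convex combination of the eigenvalues. This reduction is the backbone of the argument: on $\mathrm{span}(g_i:i\in S)$ for any index set $S$, the Rayleigh quotient ranges exactly between $\min_{i\in S}\mu_i$ and $\max_{i\in S}\mu_i$, and attains these extremes on the corresponding eigenvectors.

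To establish the min--max identity, I would first test the candidate subspace $W_k^\star:=\mathrm{span}(g_1,\ldots,g_k)$: on it, the Rayleigh quotient is a convex combination of $\mu_1,\ldots,\mu_k$ and therefore bounded above by $\mu_k$, with equality at $g=g_k$. This yields $\min_{W_k}\max_{g\in W_k}(Bg,g)/(g,g)\leq\mu_k$. For the reverse inequality, I would use a standard dimension-counting argument: any $k$-dimensional subspace $W_k\subseteq W$ must meet the $(n-k+1)$-dimensional subspace $\mathrm{span}(g_k,\ldots,g_n)$ nontrivially, since their dimensions sum to $n+1$; any nonzero vector in this intersection has Rayleigh quotient at least $\mu_k$, hence $\max_{g\in W_k}(Bg,g)/(g,g)\geq\mu_k$ for every $W_k$. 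Combining the two directions yields the min--max formula, and the symmetric argument, now using $\mathrm{span}(g_k,\ldots,g_n)$ as the optimal $(n-k+1)$-dimensional subspace and intersecting with $\mathrm{span}(g_1,\ldots,g_k)$, gives the max--min formula.

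For the characterization of optimizers, I would revisit the equality cases in the estimates above: the Rayleigh quotient on $W_k^\star$ attains $\mu_k$ precisely on the intersection of $W_k^\star$ with the $\mu_k$-eigenspace, which identifies $g_k$ as a realizer and $W_k^\star$ as an optimal $k$-dimensional subspace; symmetrically on the max--min side. Formula~(\ref{eqminmax}) is then immediate, since the linear constraints $(g,g_j)=0$ for $j=1,\ldots,k-1$ cut $W$ down to exactly $\mathrm{span}(g_k,\ldots,g_n)$, on which the Rayleigh quotient attains its minimum $\mu_k$ at $g_k$; the upper-index constraints give the dual statement, and the unconstrained extremes $\mu_1=\min$ and $\mu_n=\max$ correspond to $k=1$ and $k=n$.

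The main obstacle is not really mathematical but organizational: one must keep careful track of which $k$-dimensional and which $(n-k+1)$-dimensional subspace plays the role of the ``test'' versus the ``certificate'' in each of the two dual formulations, and extract the equality cases cleanly to justify the assertions about optimizing vectors and subspaces. The genuine mathematical content is concentrated in two classical ingredients---spectral diagonalization of a self-adjoint operator, and the fact that two subspaces whose dimensions sum to more than $n$ must intersect nontrivially---after which the rest is a bookkeeping exercise in convex combinations.
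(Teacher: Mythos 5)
Your argument is correct and complete in outline: it is the standard textbook proof of the Courant--Fischer--Weyl theorem via the spectral theorem, the representation of the Rayleigh quotient as a convex combination of eigenvalues, and the dimension-counting fact that a $k$-dimensional and an $(n-k+1)$-dimensional subspace of an $n$-dimensional space must intersect nontrivially. There is nothing to compare against here, because the paper states this as a classical result and supplies no proof at all; it is simply imported as a tool and then applied to $L$ and $L^H$. The only point worth polishing if you were to write this out in full is the characterization of optimizers: when $\mu_k$ has multiplicity greater than one, the optimal $k$-dimensional subspace is not unique, so the claim that the min-max spaces \emph{are} spanned by eigenvectors for $\mu_1,\ldots,\mu_k$ should be read as asserting the existence of such an optimal subspace (your $W_k^\star$), which your equality-case analysis already delivers.
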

	\begin{definition}
		$\frac{(Bg,g)}{(g,g)}$ is the \emph{Rayleigh quotient} of $g$.
	\end{definition}
	\begin{remark}
		Without loss of generality, we may assume $(g,g)=1$ in (\ref{eqminmax}).
	\end{remark}
	
	As a consequence of the min-max principle, we can write the eigenvalues of $L$ and $L^H$ as the \emph{min-max} or \emph{max-min} of the Rayleigh quotients
	\begin{align*}
	\mathrm{RQ}(f):&=\frac{(\delta f,\delta f)_H}{(f,f)_V}\\
	&=\frac{\sum_{h\in H}\biggl(\sum_{v\in V}C_{v,h}\cdot f(v)\biggr)^2}{\sum_{v\in V}\deg v\cdot f(v)^2}, \quad \text{for }f:V\rightarrow\mathbb{R}
	\end{align*}and
	\begin{align*}
	\mathrm{RQ}(\gamma):&=\frac{(\delta^* \gamma,\delta^* \gamma)_V}{(\gamma,\gamma)_H}\\
	&=\frac{\sum_{v\in V}\frac{1}{\deg v}\cdot \biggl(\sum_{h\in H}C_{v,h}\cdot\gamma(h)\biggr)^2}{\sum_{h\in H}\gamma(h)^2}
	,\quad \text{for }\gamma:H\rightarrow\mathbb{R}.
	\end{align*}
	In particular,
	\begin{equation*}
	\lambda_1=\lambda_1^H=\min_{f:V\rightarrow\mathbb{R}}\mathrm{RQ}(f)=\min_{\gamma:H\rightarrow\mathbb{R}}\mathrm{RQ}(\gamma)
	\end{equation*}and
	\begin{equation*}
	\lambda_N=\lambda_M^H=\max_{f:V\rightarrow\mathbb{R}}\mathrm{RQ}(f)=\max_{\gamma:H\rightarrow\mathbb{R}}\mathrm{RQ}(\gamma).
	\end{equation*}
	\section{Bipartiteness and largest eigenvalue}\label{Section Largest eigenvalue}
	A simple graph is \emph{bipartite} if there exists a bipartition of the vertex set into two disjoint sets $V=V_2\sqcup V_2$ such that each edge in is between a vertex in $V_1$ and a vertex in $V_2$. Bipartiteness is an important geometrical property and it is known that the largest eigenvalue of a graph measures how far the graph is from being bipartite. In particular, as shown by Chung in \cite{Chung}, $\lambda_N\leq 2$ for any simple connected graph $\Gamma$, with equality if and only if $\Gamma$ is bipartite.
	
	For chemical hypergraphs the notion of bipartiteness has been generalized in \cite{JM2019}, and in \cite{Sharp} it has been proved that 
	\begin{equation*}
	\lambda_N\leq \max_{h\in H}|h|,
	\end{equation*}with equality if and only if $\Gamma$ is bipartite and $|h|$ is constant for all $h$. Hence, for a chemical hypergraph with constant $|h|$, we can say that $|h|-\lambda_N$ estimates how different the hypergraph is from being bipartite.\newline
	Here we prove a further generalization of the above inequality to the case of hypergraphs with real coefficients. Before, we generalize the definition of bipartite hypergraph.
	\begin{definition}
		Given $h\in H$ and $v\in h$, $v$ is an \emph{input} (resp. \emph{output}) for $h$ if $C_{v,h}>0$ (resp. $C_{v,h}<0$).
	\end{definition}
	
	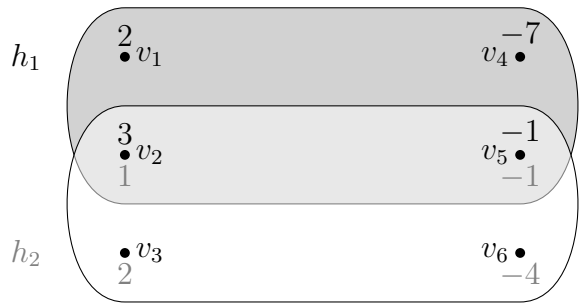
\begin{figure}[t!]
		\begin{center}
			\begin{tikzpicture}[scale=1.3]
			\node (v3) at (1,0) {};
			\node (v2) at (1,1) {};
			\node (v1) at (1,2) {};
			\node (v6) at (5,0) {};
			\node (v5) at (5,1) {};
			\node (v4) at (5,2) {};
			
			\begin{scope}[fill opacity=0.5]
			\filldraw[fill=gray!70] ($(v1)+(0,0.5)$) 
			to[out=180,in=180] ($(v2) + (0,-0.5)$) 
			to[out=0,in=180] ($(v5) + (0,-0.5)$)
			to[out=0,in=0] ($(v4) + (0,0.5)$)
			to[out=180,in=0] ($(v1)+(0,0.5)$);
			\filldraw[fill=white!70] ($(v2)+(0,0.5)$) 
			to[out=180,in=180] ($(v3) + (0,-0.5)$) 
			to[out=0,in=180] ($(v6) + (0,-0.5)$)
			to[out=0,in=0] ($(v5) + (0,0.5)$)
			to[out=180,in=0] ($(v2)+(0,0.5)$);
			\end{scope}
			
			\fill (v1) circle (0.05) node [right] {$v_1$} node [above] {\color{black}$2$};
			\fill (v2) circle (0.05) node [right] {$v_2$} node [above] {\color{black}$3$} node [below] {\color{gray}$1$};
			\fill (v3) circle (0.05) node [right] {$v_3$} node [below] {\color{gray}$2$};
			\fill (v4) circle (0.05) node [left] {$v_4$} node [above] {\color{black}$-7$};
			\fill (v5) circle (0.05) node [left] {$v_5$} node [above] {\color{black}$-1$}node [below] {\color{gray}$-1$};
			\fill (v6) circle (0.05) node [left] {$v_6$} node [below] {\color{gray}$-4$};
			
			\node at (0,2) {\color{black}$h_1$};
			\node at (0,0) {\color{gray}$h_2$};
			\end{tikzpicture}
		\end{center}
		\caption{A bipartite hypergraph with $V_1=\{v_1,v_2,v_3\}$ and $V_2=\{v_4,v_5,v_6\}$. The numbers near the vertices represent the corresponding coefficients.}\label{fig:bipartiteh}
	\end{figure}
	\begin{definition}
		$\Gamma$ is \emph{bipartite} if one can decompose the vertex set as a disjoint union $V=V_1\sqcup V_2$ such that, for every $h\in H$, either $h$ has all its inputs in $V_1$ and all its outputs in $V_2$, or vice versa (Figure \ref{fig:bipartiteh}).
	\end{definition}
	\begin{theorem}\label{thm:lambdaN}
		For every hypergraph with real coefficients $\Gamma$,
		\begin{equation}\label{eq3.1}
		\lambda_N\leq \max_{h\in H}|h|,
		\end{equation}with equality if and only if $\Gamma$ is bipartite, $|h|$ is constant for all $h\in H$ and it is possible to define a non-zero function $f$ on the vertex set such that 
		\begin{equation}\label{eq:gh}
		g(h):=|C_{v,h}\cdot f(v)|
		\end{equation} does not depend on $v$, for all $h\in H$ and $v\in h$. In this case, $f$ satisfying \eqref{eq:gh} is an eigenfunction for $\lambda_N$.
	\end{theorem}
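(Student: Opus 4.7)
The plan is to combine the Rayleigh quotient characterization $\lambda_N=\max_{f:V\to\mathbb{R},\,f\not\equiv 0}\mathrm{RQ}(f)$ with a hyperedge-wise Cauchy--Schwarz bound, and then read off the equality conditions.

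For the bound \eqref{eq3.1}, I would fix any non-identically-zero $f:V\to\mathbb{R}$ and apply Cauchy--Schwarz on each $h\in H$ to the vectors $(1)_{v\in h}$ and $(C_{v,h}f(v))_{v\in h}$, obtaining
\begin{equation*}
\left(\sum_{v\in h} C_{v,h}\cdot f(v)\right)^{2} \leq |h|\cdot \sum_{v\in h}\bigl(C_{v,h}\cdot f(v)\bigr)^{2}.
\end{equation*}
Summing over $h$, using $|h|\leq \max_{h'\in H}|h'|$, and rewriting $\sum_{h\in H}\sum_{v\in h}C_{v,h}^{2}f(v)^{2}=\sum_{v\in V}f(v)^{2}\deg v$ by switching the order of summation, one deduces $\mathrm{RQ}(f)\leq\max_{h\in H}|h|$, and \eqref{eq3.1} follows.

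For the ``only if'' direction of equality, let $f$ be a $\lambda_N$-eigenfunction with $\mathrm{RQ}(f)=\max_{h}|h|=:K$. Equality throughout the chain above forces, on every hyperedge $h$ with $S_h:=\sum_{v\in h}C_{v,h}^{2}f(v)^{2}>0$, both Cauchy--Schwarz equality (so $C_{v,h}f(v)$ is constant in $v\in h$) and the cardinality match $|h|=K$. The crucial step is then a connectedness propagation: since $f\not\equiv 0$, fix $v_{0}$ with $f(v_{0})\neq 0$, and let $h_{0}\ni v_{0}$ (which exists because $\Gamma$ has no isolated vertices); then $S_{h_{0}}>0$, so $C_{v,h_{0}}f(v)=C_{v_{0},h_{0}}f(v_{0})\neq 0$ for every $v\in h_{0}$, forcing $f\neq 0$ on all of $h_{0}$. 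Iterating along paths, which exist by the connectedness of $\Gamma$, yields $f(v)\neq 0$ for every $v\in V$. Consequently the equality conditions apply to every $h\in H$: $|h|\equiv K$ and $g(h):=|C_{v,h}f(v)|$ is well defined. Setting $V_{1}:=\{v:f(v)>0\}$ and $V_{2}:=\{v:f(v)<0\}$ gives $V=V_{1}\sqcup V_{2}$, and the constant sign of $C_{v,h}f(v)$ on each $h$ splits the inputs ($C_{v,h}>0$) and outputs ($C_{v,h}<0$) of $h$ between $V_{1}$ and $V_{2}$, yielding bipartiteness.

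For the ``if'' direction, suppose $\Gamma$ is bipartite with $V=V_{1}\sqcup V_{2}$, that $|h|\equiv K$, and that a non-zero $f_{0}:V\to\mathbb{R}$ satisfying \eqref{eq:gh} exists. Setting $f(v):=|f_{0}(v)|$ for $v\in V_{1}$ and $f(v):=-|f_{0}(v)|$ for $v\in V_{2}$ preserves the property $|C_{v,h}f(v)|=g(h)$, while bipartiteness now forces $C_{v,h}f(v)$ to be literally constant on each $h$. A direct calculation then gives
\begin{equation*}
\mathrm{RQ}(f)=\frac{\sum_{h\in H} K^{2}\,g(h)^{2}}{\sum_{h\in H} K\,g(h)^{2}}=K,
\end{equation*}
so $\lambda_{N}\geq K$; combined with \eqref{eq3.1} this forces equality, and $f$ realizes the maximum of the Rayleigh quotient, hence is an eigenfunction for $\lambda_{N}$. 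The main obstacle I anticipate is the connectedness propagation in the ``only if'' direction: the equality conditions derived from Cauchy--Schwarz and from the cardinality bound initially apply only to hyperedges supporting $f$, and connectedness is essential for ruling out exceptional hyperedges on which $f$ vanishes identically or whose cardinality is strictly less than $K$.
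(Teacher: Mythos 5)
Your proof is correct and follows essentially the same route as the paper: bounding the Rayleigh quotient hyperedge-by-hyperedge via $\bigl(\sum_{v\in h}C_{v,h}f(v)\bigr)^2\leq |h|\sum_{v\in h}C_{v,h}^2f(v)^2$, then using connectedness to propagate non-vanishing of $f$ and extract the equality conditions. The only difference is packaging — you obtain this inequality in one Cauchy--Schwarz step (whose equality case directly gives $C_{v,h}f(v)$ constant, hence both bipartiteness and \eqref{eq:gh}), where the paper splits it into a triangle-inequality step and an AM--GM step — and your explicit sign adjustment of $f_0$ in the ``if'' direction is, if anything, slightly more careful than the original.
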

	\begin{proof}
		Let $f:V\rightarrow\mathbb{R}$ be an eigenfunction for $\lambda_N$. By the min-max principle,
		\begin{align*}
		\lambda_N&=\frac{\sum_{h\in H}\Bigl(\sum_{v\in V}C_{v,h}\cdot f(v)\Bigr)^2}{\sum_{v\in V}\deg v f(v)^2}\\
		&\leq \frac{\sum_{h\in H}\Bigl(\sum_{v\in V}|C_{v,h}\cdot f(v)|\Bigr)^2}{\sum_{v\in V}\deg v f(v)^2},
		\end{align*}with equality if and only if $f$ has its non-zero values on a bipartite sub-hypergraph. Now, for each $h\in H$,
		\begin{align*}
		\Bigl( \sum_{v\in V}|C_{v,h}\cdot f(v)|\Bigr)^2&= \sum_{v\in h}C_{v,h}^2\cdot f(v)^2+\sum_{\{v,w\}:\,v\neq w \in h}2\cdot |C_{v,h}\cdot f(v)|\cdot|C_{w,h}\cdot f(w)|\\
		&\leq \sum_{v\in h}C_{v,h}^2\cdot f(v)^2+\sum_{\{v,w\}:\,v\neq w \in h}\biggl(C_{v,h}^2\cdot f(v)^2+C_{w,h}^2\cdot f(w)^2 \biggr)\\
		&=\sum_{v\in h}C_{v,h}^2\cdot f(v)^2 + \sum_{v\in h}(|h|-1)\cdot C_{v,h}^2\cdot f(v)^2\\
		&=|h|\cdot \sum_{v\in h}C_{v,h}^2\cdot f(v)^2,
		\end{align*}with equality if and only if $|C_{v,h}\cdot f(v)|=:g(h)$ is constant for all $v\in h$. Therefore,
		\begin{align*}
		\frac{\sum_{h\in H}\Bigl(\sum_{v\in V}|C_{v,h}\cdot f(v)|\Bigr)^2}{\sum_{v\in V}\deg v f(v)^2}&\leq \frac{\sum_{h\in H}|h|\cdot\sum_{v\in h} C_{v,h}^2\cdot f(v)^2}{\sum_{v\in V}\deg v f(v)^2}\\
		&=\frac{\sum_{v\in V}\sum_{h\ni v}|h|\cdot C_{v,h}^2\cdot f(v)^2}{\sum_{v\in V}\deg v f(v)^2}\\
		&\leq \Bigl(\max_{h\in H}|h|\Bigr)\cdot \frac{\sum_{v\in V}f(v)^2\bigl(\sum_{h\ni v}C_{v,h}^2\bigr)}{\sum_{v\in V}\deg v f(v)^2}\\
		&= \Bigl(\max_{h\in H}|h|\Bigr)\cdot \frac{\sum_{v\in V}\deg v  f(v)^2}{\sum_{v\in V}\deg v f(v)^2}\\
		&=\max_{h\in H}|h|,
		\end{align*}where the first inequality is an equality if and only if
		$g(h)=|C_{v,h}\cdot f(v)|$ does not depend on $v$, for all $h\in H$ and $v\in h$, and the last inequality is an equality if and only if $|h|$ is constant for all $h$. Putting everything together, we have that
		\begin{equation*}
		\lambda_N\leq \max_{h\in H}|h|,
		\end{equation*}with equality if and only if $|h|$ is constant for all $|h|$, $f$ is defined on a bipartite sub-hypergraph and
		\begin{equation}\label{eq:fv}
		f(v)=\frac{g(h)}{|C_{v,h}|},\quad \forall v\in V\text{ and }h\ni v.
		\end{equation}
		Now, in the case when equality is achieved, assume by contradiction that there exists $v'\in V$ with $f(v')=0$. Then, by \eqref{eq:fv} and by the connectivity of $\Gamma$, $f(v)=0$ for all $v\in V$. This is a contradiction. Therefore, if $f$ satisfies \eqref{eq:fv} and it's defined on a bipartite sub-hypergraph, then $\Gamma$ itself is bipartite. This proves the claim.
	\end{proof}
	
	\begin{remark}
		If $\Gamma$ is bipartite, $h$ is constant for all $h\in H$ and $|C_{v,h}|$ is constant for all $v\in V$ and $h\in H$, then clearly it is always possible to find a function $f$ that satisfies \eqref{eq:gh}. Therefore, in this case, by Theorem \ref{thm:lambdaN}, $\lambda_N=\max_{h\in H}|h|$ and $f$ is a corresponding eigenfunction. However, it is not always possible to find a function $f$ that satisfies \eqref{eq:gh}, as shown by the next example.
	\end{remark}
	\begin{example}\label{ex:1}
		\begin{figure}
			\centering
			\includegraphics[width=6cm]{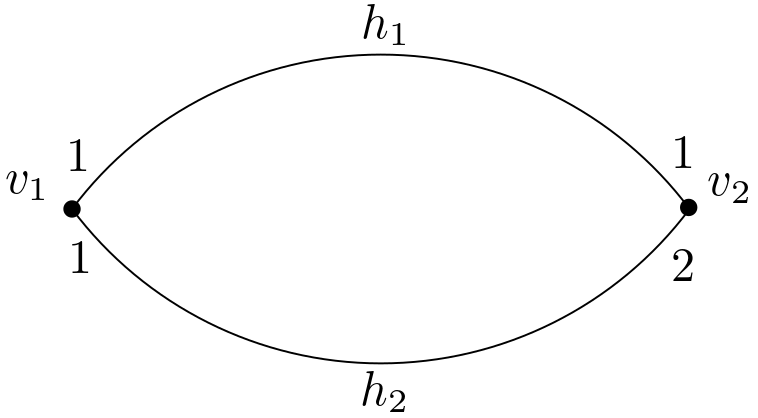}
			\caption{The hypergraph in Example \ref{ex:1}.}\label{fig:ex1}
		\end{figure}
		
		Let $\Gamma=(V,H,\mathcal{C})$ be such that (Figure \ref{fig:ex1}):
		\begin{itemize}
			\item[-] $V=\{v_1,v_2\}$,
			\item[-] $H=\{h_1,h_2\}$,
			\item[-] $C_{v_1,h_1}=C_{v_2,h_1}=C_{v_1,h_2}=1$, and
			\item[-] $C_{v_2,h_2}=2$.
		\end{itemize}Then, $\Gamma$ is a connected bipartite hypergraph such that $|h|=2$ is constant for all $h\in H$. However, it is not possible to find a function $f$ that satisfies \eqref{eq:gh} and therefore, by Theorem \ref{thm:lambdaN}, $\lambda_N=\lambda_2<2$. In fact, a function $f$ satisfying \eqref{eq:gh} in this case should be such that
		\begin{itemize}
			\item[-] $|f(v_1)|=|f(v_2)|=g(h_1)$ and
			\item[-] $|f(v_1)|=2\cdot |f(v_2)|=g(h_2)$,
		\end{itemize}which implies $f(v_1)=f(v_2)=0$, but this is a contraction.
		In this case, in particular, the vertex normalized Laplacian is
		\begin{align*}
		L=&\mathrm{Id}-D^{-1}A\\
		&=\begin{pmatrix}
		\begin{matrix}
		1&0\\
		0&1
		\end{matrix}
		\end{pmatrix}-\begin{pmatrix}
		\begin{matrix}
		1/2&0\\
		0&1/5
		\end{matrix}
		\end{pmatrix}\begin{pmatrix}
		\begin{matrix}
		0&-3\\
		-3&0
		\end{matrix}
		\end{pmatrix}\\
		&=\begin{pmatrix}
		\begin{matrix}
		1&3/2\\
		3/5&1
		\end{matrix}
		\end{pmatrix}.
		\end{align*}Its eigenvalues are
		\begin{equation*}
		\lambda_1=1-\frac{3}{\sqrt{10}}\cong 0.05 \quad \text{and}\quad  \lambda_2=1+\frac{3}{\sqrt{10}}\cong 1.95,
		\end{equation*}with eigenvectors
		\begin{equation*}
		f_1=\bigl(\sqrt{5/2},1\bigr) \quad \text{and}\quad  f_2=\bigl(-\sqrt{5/2},1\bigr),
		\end{equation*}respectively.
	\end{example}
	\section{Bounds on general eigenvalues}\label{Section:general}
	In this section we give some general bounds on all eigenvalues of $\Gamma$.
	
	\begin{definition}
		Given $\hat{v}\in V$, we let $\Gamma\setminus \hat{v}:=(\hat{V},\hat{H},\hat{\mathcal{C}})$, where:
		\begin{itemize}
			\item $\hat{V}=V\setminus\hat{v}$,
			\item $\hat{H}=\{h\setminus\hat{v}:h\in H\}$, and
			\item $\hat{\mathcal{C}}=\{C_{v,h}\in \mathcal{C}:v\in \hat{V} \text{ and }h\in \hat{H}\}$.
		\end{itemize}We say that $\Gamma\setminus \hat{v}$ is obtained from $\Gamma$ by a \emph{weak vertex deletion} of $\hat{v}$. We say that $\Gamma$ is obtained from $\Gamma\setminus \hat{v}$ by a \emph{weak vertex addition} of $\hat{v}$. 
	\end{definition}
	\begin{lemma}\label{lemma:Cauchy}If $\hat{\Gamma}$ is obtained from $\Gamma$ by weak-deleting $r$ vertices,
		\begin{equation*}
		\lambda_{k}(\Gamma)\leq \lambda_k(\hat{\Gamma})\leq \lambda_{k+r}(\Gamma)\qquad\text{for all }k\in\{1,\ldots,n-r\}.
		\end{equation*}
	\end{lemma}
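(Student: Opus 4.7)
The plan is to deduce both inequalities from the Courant--Fischer--Weyl min--max principle of Section~\ref{Section min-max principle} by exhibiting the right test subspaces of functions on $V$ and on $\hat V$. The key preliminary observation is that the Rayleigh quotient is preserved under extension by zero: given $\hat f:\hat V\to\mathbb{R}$, extend it to $f:V\to\mathbb{R}$ by setting $f(\hat v)=0$ on each of the $r$ deleted vertices. Since the coefficients of the surviving vertex--hyperedge incidences are unchanged by weak deletion, $\deg_\Gamma v=\deg_{\hat\Gamma} v$ for each $v\in\hat V$, and for every $h\in H$ the sum $\sum_{v\in V}C_{v,h}f(v)$ reduces to the analogous hyperedge sum over $\hat h=h\setminus\{\hat v_1,\dots,\hat v_r\}$ in $\hat\Gamma$. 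Hence $\mathrm{RQ}_\Gamma(f)=\mathrm{RQ}_{\hat\Gamma}(\hat f)$.

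For the lower bound $\lambda_k(\Gamma)\leq\lambda_k(\hat\Gamma)$, I would take any $k$-dimensional subspace $W$ of $\mathbb{R}^{\hat V}$ and let $W'$ be its image under the (injective) extension-by-zero map; $W'$ is a $k$-dimensional subspace of $\mathbb{R}^V$, and the two Rayleigh quotients agree on corresponding elements. The min--max principle applied to $L$ on $\Gamma$ gives
\[
\lambda_k(\Gamma)\leq\max_{0\neq f\in W'}\mathrm{RQ}_\Gamma(f)=\max_{0\neq\hat f\in W}\mathrm{RQ}_{\hat\Gamma}(\hat f),
\]
and minimizing over $W$ produces $\lambda_k(\Gamma)\leq\lambda_k(\hat\Gamma)$.

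For the upper bound $\lambda_k(\hat\Gamma)\leq\lambda_{k+r}(\Gamma)$, I would pick eigenfunctions $f_1,\dots,f_{k+r}$ of $L$ on $\Gamma$ for $\lambda_1(\Gamma),\dots,\lambda_{k+r}(\Gamma)$ and set $U=\mathrm{span}\{f_1,\dots,f_{k+r}\}$, so that $\mathrm{RQ}_\Gamma\leq\lambda_{k+r}(\Gamma)$ on $U$. Let $Z\subseteq\mathbb{R}^V$ be the subspace of functions vanishing on all $r$ deleted vertices, which has $\dim Z=N-r$. The standard dimension bound then yields
\[
\dim(U\cap Z)\geq (k+r)+(N-r)-N=k.
\]
Since restriction to $\hat V$ is injective on $Z$ and the restricted Rayleigh quotient matches the original, the image of $U\cap Z$ under restriction is a $k$-dimensional subspace $W\subseteq\mathbb{R}^{\hat V}$ on which $\mathrm{RQ}_{\hat\Gamma}\leq\lambda_{k+r}(\Gamma)$. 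The min--max principle for $L$ on $\hat\Gamma$ then concludes $\lambda_k(\hat\Gamma)\leq\lambda_{k+r}(\Gamma)$.

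The main thing to verify with care is the invariance of the Rayleigh quotient under extension by zero; once that is in hand, the rest is a Cauchy-style interlacing argument using only dimension counting. A minor point worth checking is that weak deletion does not spoil the set-up via empty hyperedges or isolated vertices in $\hat\Gamma$: empty hyperedges contribute zero to both numerator and denominator of $\mathrm{RQ}_{\hat\Gamma}$, and the bound is only meaningful when $\hat\Gamma$ itself satisfies the standing assumption of having no isolated vertices.
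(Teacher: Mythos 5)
Your proposal is correct. The paper itself disposes of this lemma in one line, by observing (as in Lemma 2.20 of \cite{MulasZhang}) that the claim follows from inductively applying the Cauchy Interlacing Theorem \cite[Theorem 4.3.17]{MatrixAnalysis}; what you have written is, in effect, a self-contained proof of that interlacing statement specialized to this setting, carried out directly with the min--max principle of Section \ref{Section min-max principle} rather than by citation. The two approaches rest on the same fact, which the paper leaves implicit and you make explicit: under weak deletion the degrees of the surviving vertices and the off-diagonal adjacency entries $A_{ij}$ for surviving $i,j$ are unchanged, so the (symmetrized) Laplacian of $\hat{\Gamma}$ is a principal submatrix of that of $\Gamma$ --- in your variational language, the Rayleigh quotient is invariant under extension by zero. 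Your version buys transparency and avoids having to pass to the symmetric conjugate $D^{-1/2}(D-A)D^{-1/2}$ of the non-symmetric matrix $L=\mathrm{Id}-D^{-1}A$, which is the step one must silently perform before the matrix-theoretic interlacing theorem even applies; the paper's version buys brevity. Two small points you rightly flag and that deserve the care you give them: the dimension count $\dim(U\cap Z)\geq k$ and the injectivity of restriction on $Z$ are exactly what make the upper bound work, and the standing assumption that $\hat{\Gamma}$ has no isolated vertices is needed for $(\cdot,\cdot)_V$ on $\hat{\Gamma}$ to remain positive definite, so the statement should be read with that proviso. Your argument also proves the result in one shot for general $r$, whereas the cited route deletes one vertex at a time and inducts; both are fine.
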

	\begin{proof} As in the proof of Lemma 2.20 in \cite{MulasZhang}, the claim follows directly by inductively applying the Cauchy Interlacing Theorem \cite[Theorem 4.3.17]{MatrixAnalysis}.
	\end{proof}
	\begin{theorem}
		Up to a relabelling of the hyperedges $h_1,\ldots,h_M$, assume that
		\begin{equation*}
		|h_1|\leq \ldots\leq |h_M|.
		\end{equation*}Then,
		\begin{equation}\label{eqN-1}
		\lambda_{N-1}\leq \max\{|h_M|-1,|h_{M-1}|\}
		\end{equation}and, more generally,
		\begin{equation}\label{eqN-k}
		\lambda_{N-k}\leq \max\{|h_M-k|,|h_{M-1}|-k+1,\ldots,|h_{M-k}|\}=\max\{|h_{M-i}|-k+i\}_{i=0,\ldots,k}
		\end{equation}for all $k\in\{1,\ldots,N-1\}$.
	\end{theorem}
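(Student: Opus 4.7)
The plan is to apply the Courant--Fischer--Weyl min-max principle of Section \ref{Section min-max principle} to a test subspace of dimension at least $N-k$ tailored to the $k$ hyperedges of largest cardinality. First I would set $R:=\{h_{M-k+1},\ldots,h_M\}$ and define
\begin{equation*}
W:=\bigl\{f:V\rightarrow\mathbb{R}\ :\ \delta f(h)=0\text{ for every }h\in R\bigr\}.
\end{equation*}
Since $W$ is the kernel of the linear map $\mathbb{R}^N\rightarrow\mathbb{R}^k$ given by $f\mapsto(\delta f(h))_{h\in R}$, its dimension is at least $N-k$.

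Next I would bound the Rayleigh quotient uniformly on $W$. For $f\in W$ the hyperedges in $R$ contribute nothing to $(\delta f,\delta f)_H$, so
\begin{equation*}
\mathrm{RQ}(f)=\frac{\sum_{h\in H\setminus R}(\delta f(h))^2}{(f,f)_V}.
\end{equation*}
Repeating the Cauchy--Schwarz chain used in the proof of Theorem \ref{thm:lambdaN}, but with the outer sum over hyperedges restricted to $H\setminus R=\{h_1,\ldots,h_{M-k}\}$, each of cardinality at most $|h_{M-k}|$, yields $\mathrm{RQ}(f)\le|h_{M-k}|$ for every $f\in W$.

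Finally, the min-max principle applied to any $(N-k)$-dimensional subspace $W'\subseteq W$ gives $\lambda_{N-k}\le\max_{f\in W'}\mathrm{RQ}(f)\le\max_{f\in W}\mathrm{RQ}(f)\le|h_{M-k}|$. Since $|h_{M-k}|$ is exactly the $i=k$ summand in the right-hand side of \eqref{eqN-k}, it is automatically dominated by $\max_{i=0,\ldots,k}(|h_{M-i}|-k+i)$, and the inequality follows. The same construction for smaller $i$, namely with $R_i:=\{h_{M-i+1},\ldots,h_M\}$ and an additional vanishing condition on $k-i$ chosen vertices of $h_{M-i}$, gives the intermediate terms $|h_{M-i}|-k+i$ and shows why the max on the right-hand side of \eqref{eqN-k} is the natural combinatorial bound produced by this method.

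The main obstacle is to confirm that the argument still applies when the $k$ columns of the incidence matrix indexed by $R$ are linearly dependent, in which case $\dim W>N-k$. Every $(N-k)$-dimensional subspace of $W$ inherits the same uniform Rayleigh-quotient bound, so the min-max inequality still yields $\lambda_{N-k}\le|h_{M-k}|$ without any independence hypothesis on the top $k$ hyperedges.
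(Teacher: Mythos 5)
Your proof is correct, but it takes a genuinely different route from the paper's. The paper proves \eqref{eqN-1} by fixing a vertex $v\in h_M$, applying Theorem \ref{thm:lambdaN} to the weak vertex deletion $\Gamma\setminus v$ (whose largest hyperedge cardinality is $\max\{|h_M|-1,|h_{M-1}|\}$), and transferring the bound back via the Cauchy interlacing Lemma \ref{lemma:Cauchy}; the general case \eqref{eqN-k} is then obtained by iterating. You instead work directly with the min-max principle on the test space $W=\{f:\delta f(h)=0\ \forall h\in R\}$, $R$ being the $k$ largest hyperedges, and your Rayleigh-quotient estimate is sound: restricting the outer sum to $H\setminus R$ and using $\sum_{h\ni v,\,h\notin R}C_{v,h}^2\le\deg v$ after Cauchy--Schwarz and the exchange of summation gives $\mathrm{RQ}(f)\le\max_{h\notin R}|h|=|h_{M-k}|$ on all of $W$, and since $\dim W\ge N-k$ the min-max principle yields $\lambda_{N-k}\le|h_{M-k}|$; your observation that possible linear dependence of the columns indexed by $R$ only enlarges $W$ and is therefore harmless is also correct. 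What your approach buys is in fact a \emph{stronger} conclusion than \eqref{eqN-k}: $|h_{M-k}|$ is the $i=k$ term of $\max\{|h_{M-i}|-k+i\}_{i=0,\ldots,k}$, so your single bound dominates the whole right-hand side (e.g.\ for $h_1=h_2=\{v_1,v_2\}$ and $h_3=V$ with $|V|=10$, you get $\lambda_9\le 2$ whereas \eqref{eqN-1} only gives $\lambda_9\le 9$). The price is that the sharpness discussion of Theorem \ref{thm:lambdaN} does not carry over, whereas the paper's deletion argument keeps each step tied to that theorem. Your closing paragraph about recovering the intermediate terms $|h_{M-i}|-k+i$ via spaces $R_i$ plus vanishing conditions on $k-i$ vertices is not needed for the statement and is only sketched; I would either drop it or note explicitly that the main argument already implies \eqref{eqN-k} because $|h_{M-k}|\le\max_{i=0,\ldots,k}\bigl(|h_{M-i}|-k+i\bigr)$.
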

	\begin{proof}
		Fix a vertex $v\in h_M$. Then, the largest hyperedge cardinality of $\Gamma\setminus v$ is either $|h_M|-1$ or $|h_{M-1}|$. Therefore, by Theorem \ref{thm:lambdaN} applied to $\Gamma\setminus v$,
		\begin{equation*}
		\lambda_{N-1}(\Gamma\setminus v)\leq  \max\{|h_M|-1,|h_{M-1}|\}.
		\end{equation*}Together with Lemma \ref{lemma:Cauchy}, this proves \eqref{eqN-1}. \eqref{eqN-k} follows by repeatedly applying \eqref{eqN-1}.
	\end{proof}
	\begin{theorem}\label{thm:min}Let $\lambda_{\min}$ be the smallest non-zero eigenvalue of $\Gamma$. Then,
		\begin{equation*}
		\lambda_{\min}\leq \frac{N}{N-m_V}=\frac{N}{M-m_H}\leq \lambda_N
		\end{equation*}and one of the above inequalities is an equality if and only if $\lambda_{\min}=\lambda_N$.
	\end{theorem}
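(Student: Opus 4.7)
The plan is to exploit the trace identity established in the Corollary following Proposition \ref{prop:matrix}: the sum of all eigenvalues of $L$ equals $N$. Since $m_V$ of these eigenvalues vanish, the remaining $N-m_V$ nonzero eigenvalues $\lambda_{m_V+1}\leq\ldots\leq\lambda_N$ (with $\lambda_{\min}=\lambda_{m_V+1}$) satisfy
\begin{equation*}
\sum_{i=m_V+1}^{N}\lambda_i = N.
\end{equation*}

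First, I would note that each term in this sum is at least $\lambda_{\min}$ and at most $\lambda_N$, so
\begin{equation*}
(N-m_V)\cdot\lambda_{\min}\;\leq\;\sum_{i=m_V+1}^{N}\lambda_i\;=\;N\;\leq\;(N-m_V)\cdot\lambda_N,
\end{equation*}
and dividing by the positive integer $N-m_V$ yields the desired two inequalities $\lambda_{\min}\leq N/(N-m_V)\leq \lambda_N$. The middle equality $N/(N-m_V)=N/(M-m_H)$ is immediate from the identity $m_V-m_H=N-M$ already established in Section \ref{Section First properties} (bullet point on the multiplicity of $0$), which rearranges to $N-m_V=M-m_H$.

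For the equality case, I would observe that the left inequality above becomes an equality exactly when every nonzero eigenvalue attains $\lambda_{\min}$, and the right inequality becomes an equality exactly when every nonzero eigenvalue attains $\lambda_N$; in either situation all nonzero eigenvalues coincide, which forces $\lambda_{\min}=\lambda_N$. Conversely, if $\lambda_{\min}=\lambda_N$ then all nonzero eigenvalues equal this common value, so both inequalities collapse to equalities simultaneously.

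There is no real obstacle here: the argument is essentially a pigeonhole on an averaged sum. The only subtlety is making sure to invoke the correct already-proven ingredients (the trace equals $N$, the multiplicity $m_V$ counts the zero eigenvalues, and $m_V-m_H=N-M$) rather than reproving them, and to observe that $N-m_V>0$ since otherwise there is no nonzero eigenvalue and the statement is vacuous.
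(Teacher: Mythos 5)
Your argument is correct and coincides with the approach the paper itself relies on: the paper's proof is just a citation of Theorem 6.5 of the earlier work by the authors, which is exactly this trace/averaging argument (sum of the $N-m_V$ nonzero eigenvalues equals $N$, so their average $N/(N-m_V)=N/(M-m_H)$ is squeezed between $\lambda_{\min}$ and $\lambda_N$, with equality forcing all nonzero eigenvalues to coincide). Nothing is missing.
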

	\begin{proof}
		Same as the proof of Theorem 6.5 in \cite{MulasZhang}.
	\end{proof}
	\begin{remark}
		By Theorem \ref{thm:lambdaN} and Theorem \ref{thm:min} we have that
		\begin{equation*}
		\frac{N}{N-m_V}=\frac{N}{M-m_H}\leq \lambda_N\leq  \max_{h\in H}|h|.
		\end{equation*}Hence
		\begin{equation*}
		m_V\leq N\Biggl(1-\frac{1}{\max_{h\in H}|h|}\Biggr)
		\end{equation*}and
		\begin{equation*}
		m_H\leq M-\frac{N}{\max_{h\in H}|h|}.
		\end{equation*}We can therefore estimate $\max_{h\in H}|h|$ with $m_V$ or $m_H$, and vice versa.
	\end{remark}
	
	\section{Symmetries}\label{Section Symmetries}
	It is known that symmetries can leave signatures in the spectrum of a graph or a hypergraph \cite{Symmetries,symm1,symm2,symm3}. For instance,
	\begin{itemize}
		\item[-] Two vertices in a simple graph are called \emph{duplicate} if they have the same neighbors and it is well known that $\hat{n}$ duplicate vertices produce the eigenvalue $1$ with multiplicity at least $\hat{n}-1$ \cite{duplicationgraphs};
		\item[-] More generally, two vertices in a chemical hypergraph are called \emph{duplicate} if the corresponding rows/columns of the adjacency matrix are the same and it is still true that $\hat{n}$ duplicate vertices produce the eigenvalue $1$ with multiplicity at least $\hat{n}-1$ \cite{MulasZhang};
		\item[-] Two vertices in a chemical hypergraph are called \emph{twin} if they belong exactly to the same hyperedges with the same signs, and $\hat{n}$ twin vertices produce the eigenvalue $0$ with multiplicity at least $\hat{n}-1$ \cite{AndreottiMulas}.
	\end{itemize}
	
	Here we generalize the definitions of twin and anti-twin vertices to the case of hypergraphs with real coefficients, and we show the effect of various hypergraph symmetries in the spectrum of the normalized Laplacian. The propositions in this sections contain results that are new also for the case of chemical hypergraphs. 
	\begin{definition}Two distinct vertices $v_i$ and $v_j$ are:
		\begin{itemize}
			\item[-] \emph{Twin}, if $C_{v_i,h}=C_{v_j,h}$ for all $h\in H$;
			\item[-] \emph{Anti-twin}, if $C_{v_i,h}=-C_{v_j,h}$ for all $h\in H$;
			\item[-] \emph{Duplicate}, if if the corresponding rows/columns of the adjacency matrix are the same;
			\item[-] \emph{Anti-duplicate}, if the corresponding rows (equivalently, columns) of the adjacency matrix have opposite sign.
		\end{itemize}
	\end{definition}
	\begin{remark}
		If $v_i$ and $v_j$ are twin vertices, then
		\begin{equation*}
		\deg v_i=\deg v_j=-A_{ij}
		\end{equation*}while, if they are anti-twin,
		\begin{equation*}
		\deg v_i=\deg v_j=A_{ij}.
		\end{equation*}If $v_i$ and $v_j$ are duplicate or anti-duplicate vertices, their degrees are not necessarily the same and $A_{ij}=A_{ii}=A_{jj}=0$.
	\end{remark}
	
	We need a preliminary remark and a lemma in order to prove the propositions in this section.
	\begin{remark}
		$\lambda$ is an eigenvalue for $L$ and $f:V\rightarrow\mathbb{R}$ is a corresponding eigenfunction if and only if, for each $v_i\in V$, we have that
		\begin{equation*}
		\lambda\cdot f(v_i)=Lf(v_i)=f(v_i)-\frac{1}{\deg v_i}\sum_{j\neq i}A_{ij}f(v_j),
		\end{equation*}which can be rewritten as
		\begin{equation}\label{eq:eigenfunctions}
		(1-\lambda)\cdot f(v_i)=\frac{1}{\deg v_i}\sum_{j\neq i}A_{ij}f(v_j).
		\end{equation}
	\end{remark}
	
	\begin{lemma}\label{lemma:Aij}For all $v_i,v_j\in V$,
		\begin{equation*}
		\deg v_i+\deg v_j\geq \pm 2 A_{ij},
		\end{equation*}with equality if and only if $C_{v_i,h}=\mp C_{v_j,h}$ for all $h\in H$.
	\end{lemma}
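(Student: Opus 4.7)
The plan is to unfold the definitions of $\deg v_i$, $\deg v_j$, and $A_{ij}$ and recognize the resulting expression as a sum of squares. Recall $\deg v_k = \sum_{h\in H}(C_{v_k,h})^2$ and $A_{ij}=-\sum_{h\in H}C_{v_i,h}\cdot C_{v_j,h}$, so the inequality to be proved is equivalent to
\begin{equation*}
\sum_{h\in H}(C_{v_i,h})^2 + \sum_{h\in H}(C_{v_j,h})^2 \pm 2\sum_{h\in H}C_{v_i,h}\cdot C_{v_j,h} \geq 0.
\end{equation*}

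The key step is to combine the three sums into a single sum and complete the square: the left-hand side equals $\sum_{h\in H}(C_{v_i,h}\pm C_{v_j,h})^2$, which is manifestly non-negative. This immediately gives both the inequality and the equality characterization, since a sum of squares of real numbers vanishes if and only if each summand vanishes, i.e., $C_{v_i,h}\pm C_{v_j,h}=0$ for every $h\in H$, which matches the stated condition $C_{v_i,h}=\mp C_{v_j,h}$.

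There is essentially no obstacle here; the only thing to be careful about is getting the signs right. Since $A_{ij}$ carries a minus sign in its definition, the $+2A_{ij}$ case on the left-hand side of the lemma corresponds to $-2\sum_h C_{v_i,h}C_{v_j,h}$, i.e.\ to the $(C_{v_i,h}-C_{v_j,h})^2$ expansion, giving equality iff $C_{v_i,h}=C_{v_j,h}$ (the upper sign choice $\mp$ in the lemma); the $-2A_{ij}$ case corresponds to $(C_{v_i,h}+C_{v_j,h})^2$, giving equality iff $C_{v_i,h}=-C_{v_j,h}$. I would present the two cases in parallel using the $\pm/\mp$ convention to mirror the statement of the lemma.
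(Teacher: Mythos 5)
Your approach is the same as the paper's: the paper applies the termwise inequality $a^2+b^2\geq \mp 2ab$ (with equality iff $a=\mp b$) and sums over $h\in H$, which is exactly your completing-the-square identity $\sum_{h\in H}(C_{v_i,h}\pm C_{v_j,h})^2\geq 0$; your first two paragraphs constitute a correct and complete proof, including the equality characterization. However, your final paragraph, the one devoted to ``getting the signs right,'' has both cases swapped. In the $+2A_{ij}$ case the right-hand side of the lemma's inequality is $+2A_{ij}=-2\sum_{h\in H}C_{v_i,h}C_{v_j,h}$, and moving this term across the inequality flips its sign, so the quantity that must be non-negative is $\deg v_i+\deg v_j+2\sum_{h\in H}C_{v_i,h}C_{v_j,h}=\sum_{h\in H}(C_{v_i,h}+C_{v_j,h})^2$, giving equality iff $C_{v_i,h}=-C_{v_j,h}$, not the $(C_{v_i,h}-C_{v_j,h})^2$ expansion with equality iff $C_{v_i,h}=C_{v_j,h}$ as you write; note that your stated conclusion also contradicts the lemma itself, since the upper choice of $\mp$ is $-$. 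Symmetrically, the $-2A_{ij}$ case yields $\sum_{h\in H}(C_{v_i,h}-C_{v_j,h})^2$ and equality iff $C_{v_i,h}=C_{v_j,h}$. Since your displayed inequality and your second paragraph already carry the correct signs (the $\pm$ there correctly matches the lemma's $\pm$ and the $\mp$ in the equality condition), the fix is simply to correct or delete the last paragraph.
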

	
	\begin{proof}
		We use the fact that, given $a,b\in\mathbb{R}$, $a^2+b^2\geq \mp 2ab$, with equality if and only if $a=\mp b$. This implies that
		\begin{equation*}
		\sum_{h\in H}\bigl(C_{v_i,h}^2+C_{v_j,h}^2\bigr)\geq \mp 2\cdot \sum_{h\in H}C_{v_i,h}\cdot C_{v_j,h},
		\end{equation*}that is,
		\begin{equation*}
		\deg v_i+\deg v_j\geq \pm 2\cdot A_{ij},
		\end{equation*}with equality if and only if $C_{v_i,h}=\mp C_{v_j,h}$ for all $h\in H$.
	\end{proof}
	\begin{proposition}\label{prop:constants}
		The constant functions are eigenfunctions corresponding to some eigenvalue $\lambda$ if and only if $\frac{1}{\deg v_i}\sum_{j\neq i}A_{ij}$ is constant for all $v_i\in V$. In this case,
		\begin{equation*}
		\lambda=1-\frac{1}{\deg v_i}\sum_{j\neq i}A_{ij}.
		\end{equation*}
	\end{proposition}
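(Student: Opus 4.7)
The plan is to apply directly the characterization of eigenfunctions given in equation \eqref{eq:eigenfunctions} of the preceding remark, substituting a constant function and reading off the resulting condition.

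First I would assume $f \equiv c$ for some constant $c \in \mathbb{R}$ and plug this into \eqref{eq:eigenfunctions}. For each $v_i \in V$ this produces
\begin{equation*}
(1-\lambda)\cdot c = \frac{c}{\deg v_i}\sum_{j\neq i}A_{ij}.
\end{equation*}
If $f$ is to be a genuine eigenfunction, we need $c \neq 0$, so dividing both sides by $c$ yields $1-\lambda = \frac{1}{\deg v_i}\sum_{j\neq i}A_{ij}$. Since the left-hand side does not depend on $i$, this forces the quantity $\frac{1}{\deg v_i}\sum_{j\neq i}A_{ij}$ to be the same for every vertex $v_i$, and identifies $\lambda = 1 - \frac{1}{\deg v_i}\sum_{j\neq i}A_{ij}$.

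For the converse, assume $\frac{1}{\deg v_i}\sum_{j\neq i}A_{ij} = \mu$ is constant in $i$. Setting $\lambda := 1-\mu$ and taking $f \equiv c$ for any $c \neq 0$, equation \eqref{eq:eigenfunctions} is immediately satisfied at every vertex, so $f$ is indeed an eigenfunction for $\lambda$.

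There is no real obstacle here: the entire argument is a one-line substitution into \eqref{eq:eigenfunctions} followed by factoring out the nonzero constant $c$. The only minor point to be explicit about is that an eigenfunction must by convention be nonzero, which is what legitimizes dividing by $c$ in the forward direction.
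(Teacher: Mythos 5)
Your proof is correct and follows essentially the same route as the paper's: substitute a constant function into \eqref{eq:eigenfunctions}, cancel the nonzero constant to read off that $1-\lambda=\frac{1}{\deg v_i}\sum_{j\neq i}A_{ij}$ must be independent of $i$, and reverse the substitution for the converse. Your explicit remark that the eigenfunction convention requires $c\neq 0$ is a small but welcome extra precision over the paper's version.
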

	\begin{remark}
		In the case of simple graphs, $\sum_{j\neq i}A_{ij}=\deg v_i$ for each $v_i\in V$. Therefore, Proposition \ref{prop:constants} applied to simple graphs states that $\lambda=0$ is always an eigenvalue, and that the constants are corresponding eigenfunctions. This is a well known result.
	\end{remark}
	\begin{remark}
		In the case when $|h|$ is constant for all $h\in H$ and $C_{v,h}=1$ for all $h\in H$ and $v\in H$, the quantity in Proposition \ref{prop:constants} is
		\begin{equation*}
		1-\frac{1}{\deg v_i}\sum_{j\neq i}A_{ij}= 1-\frac{1}{\deg v_i}\sum_{h\ni v_i}(|h|-1)=|h|,
		\end{equation*}for all $v_i\in V$. Proposition \ref{prop:constants} tells us that, in this case, $|h|$ is an eigenvalue with constant eigenfunctions. From \cite[Theorem 3.1]{Sharp}, that we generalized in Theorem \ref{thm:lambdaN}, we know that $\lambda=|h|$ is the largest eigenvalue in this case. Therefore, while for simple graphs the constants are eigenfunctions for $\lambda_1=0$, in the general case the constants can be eigenfunctions for larger eigenvalues.
	\end{remark}

	\begin{proof}[Proof of Proposition \ref{prop:constants}]
		By \eqref{eq:eigenfunctions}, if $f$ is constant then
		\begin{equation}\label{eq:1-lambda}
		1-\lambda=\frac{1}{\deg v_i}\sum_{j\neq i}A_{ij}
		\end{equation}for all $v_i\in V$. Vice versa, if $\frac{1}{\deg v_i}\sum_{j\neq i}A_{ij}$ is constant for all $v_i\in V$, then clearly the constant functions satisfy \eqref{eq:eigenfunctions} with $\lambda=1-\frac{1}{\deg v_i}\sum_{j\neq i}A_{ij}$.
	\end{proof}
	
	\begin{proposition}\label{prop:g11}
		A function $g:V\rightarrow\mathbb{R}$ such that $g(v_i)=g(v_j)=1$ for some $v_i,v_j\in V$ and $g(v_k)=0$ for all $k\neq i,j$ is an eigenfunction for some eigenvalue $\lambda$ if and only if:
		\begin{itemize}
			\item Either $\deg v_i=\deg v_j$ or $A_{ij}=0$;
			\item $A_{ik}=-A_{jk}$ for all $k\neq i,j$.
		\end{itemize}
		In this case,
		\begin{equation*}
		\lambda=1-\frac{A_{ij}}{\deg v_i}\leq 1
		\end{equation*}and $\lambda=0$ if and only if $v_i$ and $v_j$ are anti-twin, while $\lambda=1$ if and only if $v_i$ and $v_j$ are anti-duplicate vertices.
	\end{proposition}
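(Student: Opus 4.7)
The plan is to substitute the candidate function $g$ into the eigenvalue equation \eqref{eq:eigenfunctions} vertex by vertex and to read off both conditions together with the value of $\lambda$.

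Since $g(v_i)=g(v_j)=1$ and $g$ vanishes elsewhere, evaluating \eqref{eq:eigenfunctions} at $v_i$ leaves only the $j$-th term on the right, giving $(1-\lambda)=A_{ij}/\deg v_i$. Evaluating at $v_j$ (using $A_{ji}=A_{ij}$) yields $(1-\lambda)=A_{ij}/\deg v_j$. Compatibility of these two identities forces $A_{ij}(\deg v_j-\deg v_i)=0$, which is exactly the first listed condition, and in either subcase $\lambda=1-A_{ij}/\deg v_i$. Evaluating at $v_k$ for $k\neq i,j$, the left-hand side is $0$ while the right-hand side reduces to $(A_{ki}+A_{kj})/\deg v_k$; requiring this to vanish gives the second condition $A_{ik}=-A_{jk}$. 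Conversely, if both conditions hold, the same computations show that $g$ satisfies \eqref{eq:eigenfunctions} at every vertex with $\lambda=1-A_{ij}/\deg v_i$, so the equivalence is complete.

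For the identification of the boundary values, $\lambda=0$ is equivalent to $A_{ij}=\deg v_i$. Since $\deg v_i=\deg v_j$ must then also hold, the equality case of Lemma \ref{lemma:Aij} (in the form $\deg v_i+\deg v_j\geq 2A_{ij}$, with equality iff $C_{v_i,h}=-C_{v_j,h}$ for every $h$) gives exactly the anti-twin condition, and the reverse implication is a direct check. Similarly, $\lambda=1$ amounts to $A_{ij}=0$; combined with $A_{ik}=-A_{jk}$ for $k\neq i,j$ and the trivial identities $A_{ii}=A_{jj}=0$, the relation $A_{ik}=-A_{jk}$ extends to all $k$, which is precisely the anti-duplicate condition.

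The step I expect to be the main obstacle is the upper bound $\lambda\leq 1$: the formula $\lambda=1-A_{ij}/\deg v_i$ together with Lemma \ref{lemma:Aij} under $\deg v_i=\deg v_j$ only yields $|A_{ij}|\leq\deg v_i$ and hence a priori $\lambda\in[0,2]$. To sharpen this to $\lambda\leq 1$ I would try to exploit the second condition $A_{ik}=-A_{jk}$ for $k\neq i,j$, interpreted as orthogonality of the incidence-matrix sum $\mathcal{I}_i+\mathcal{I}_j$ to every other row, and combine it with $\|\mathcal{I}_i\|=\|\mathcal{I}_j\|$ to constrain $\langle\mathcal{I}_i,\mathcal{I}_j\rangle$ and force $A_{ij}\geq 0$; this is the step where I would first look for either an implicit hypothesis in the statement or a refinement of Lemma \ref{lemma:Aij}.
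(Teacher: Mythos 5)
Your verification of the eigenfunction equivalence, the formula $\lambda=1-A_{ij}/\deg v_i$, and the characterizations of $\lambda=0$ (anti-twin, via the equality case of Lemma \ref{lemma:Aij}) and $\lambda=1$ (anti-duplicate, after extending $A_{ik}=-A_{jk}$ to $k\in\{i,j\}$ using $A_{ii}=A_{jj}=A_{ij}=0$) is exactly the paper's argument, and it is correct.

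The step you flag as the main obstacle, the bound $\lambda\leq 1$, is indeed where the trouble lies, but it cannot be repaired along the lines you suggest. The paper's own justification is that Lemma \ref{lemma:Aij} gives $A_{ij}\leq\deg v_i$ and hence $\lambda\leq 1$; this is a non sequitur, since $A_{ij}\leq\deg v_i$ yields $\lambda=1-A_{ij}/\deg v_i\geq 0$, while the other sign in the lemma gives $A_{ij}\geq-\deg v_i$ and hence only $\lambda\leq 2$. Your hope of forcing $A_{ij}\geq 0$ from the orthogonality condition $A_{ik}=-A_{jk}$ is vain, because the bound $\lambda\leq 1$ is false as stated: take $V=\{v_1,v_2\}$ with a single hyperedge $h=\{v_1,v_2\}$ and $C_{v_1,h}=C_{v_2,h}=1$. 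Then $\deg v_1=\deg v_2=1$ and $A_{12}=-1$, both bullet conditions hold (the second vacuously), and $g=(1,1)$ is an eigenfunction with $\lambda=1-(-1)/1=2$. The correct conclusion from Lemma \ref{lemma:Aij} is $0\leq\lambda\leq 2$, with $\lambda=2$ precisely when $A_{ij}=-\deg v_i$, i.e.\ precisely when $v_i$ and $v_j$ are twin; the claimed inequality $\lambda\leq 1$ should be dropped (and the analogous bound in Proposition \ref{prop:g1-1} has the same defect). So your proof of everything actually provable matches the paper; the one step you could not close is an error in the statement, not a gap in your argument.
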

	\begin{proof}
		By \eqref{eq:eigenfunctions}, if $g$ is an eigenfunction for $\lambda$, then
		\begin{itemize}
			\item $(1-\lambda)g(v_i)=(1-\lambda)=\frac{A_{ij}}{\deg v_i}$,
			\item $(1-\lambda)g(v_j)=(1-\lambda)=\frac{A_{ij}}{\deg v_j}$, and
			\item $(1-\lambda)g(v_k)=0=\frac{A_{ik}+A_{jk}}{\deg v_k}$ for all $k\neq i,j$.
		\end{itemize}Therefore, either $\deg v_i=\deg v_j$ or $A_{ij}=0$; $A_{ik}=-A_{jk}$ for all $k\neq i,j$, and $\lambda=1-\frac{A_{ij}}{\deg v_i}$.\newline
		
		Vice versa, if either $\deg v_i=\deg v_j$ or $A_{ij}=0$, and additionally $A_{ik}=-A_{jk}$ for all $k\neq i,j$, it is clear that $g$ satisfies \eqref{eq:eigenfunctions} with $\lambda=1-\frac{A_{ij}}{\deg v_i}$.\newline
		
		In this case, Lemma \ref{lemma:Aij} implies that $A_{ij}\leq \deg v_i$ which, on its turn, implies that $\lambda=1-\frac{A_{ij}}{\deg v_i}\leq 1$. Also, $\lambda=0$ if and only if $A_{ij}= \deg v_i$, therefore (again by Lemma \ref{lemma:Aij}) if and only if $C_{v_i,h}=-C_{v_j,h}$ for all $h\in H$. $\lambda=1$ if and only if $A_{ij}=0$.
	\end{proof}
	
	\begin{proposition}\label{prop:g1-1}
		A function $f:V\rightarrow\mathbb{R}$ such that $f(v_i)=-f(v_j)=1$ for some $v_i,v_j\in V$ and $f(v_k)=0$ for all $k\neq i,j$ is an eigenfunction for some eigenvalue $\lambda$ if and only if:
		\begin{itemize}
			\item Either $\deg v_i=\deg v_j$ or $A_{ij}=0$;
			\item $A_{ik}=A_{jk}$ for all $k\neq i,j$.
		\end{itemize}
		In this case,
		\begin{equation*}
		\lambda=1+\frac{A_{ij}}{\deg v_i}\leq 1
		\end{equation*}and $\lambda=0$ if and only if $v_i$ and $v_j$ are twin, while $\lambda=1$ if and only if $v_i$ and $v_j$ are duplicate vertices.
	\end{proposition}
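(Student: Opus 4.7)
The plan is to follow the proof of Proposition \ref{prop:g11} almost verbatim, with the only substantive change being the sign flip $f(v_j) = -1$ in place of $g(v_j) = +1$; this propagates as a handful of sign changes in the resulting equations.

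First I would substitute $f$ into the eigenfunction equation \eqref{eq:eigenfunctions} at each of $v_i$, $v_j$, and $v_k$ for $k \notin \{i,j\}$. Since only $f(v_i) = 1$ and $f(v_j) = -1$ are non-zero, the sums on the right collapse to a single term in the first two cases: at $v_i$ one obtains $(1-\lambda) = -A_{ij}/\deg v_i$; at $v_j$ one obtains $-(1-\lambda) = A_{ij}/\deg v_j$, equivalently $(1-\lambda) = -A_{ij}/\deg v_j$; and at $v_k$ one obtains $0 = (A_{ik} - A_{jk})/\deg v_k$. Comparing the first two equations forces $A_{ij}/\deg v_i = A_{ij}/\deg v_j$, i.e.\ either $\deg v_i = \deg v_j$ or $A_{ij} = 0$, and in either case yields $\lambda = 1 + A_{ij}/\deg v_i$. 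The equation at $v_k$ (with $\deg v_k > 0$) forces $A_{ik} = A_{jk}$. The converse is a direct substitution back into \eqref{eq:eigenfunctions}: assuming the two bullet-point conditions, the equation holds at every vertex for the claimed $\lambda$.

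For the bound $\lambda \leq 1$ and the characterization of the extremes, I would appeal to Lemma \ref{lemma:Aij}, taking the sign choice that corresponds to the twin (rather than anti-twin) case: $\deg v_i + \deg v_j \geq -2 A_{ij}$, with equality iff $C_{v_i,h} = C_{v_j,h}$ for all $h$. Under $\deg v_i = \deg v_j$ this controls $A_{ij}/\deg v_i$ and identifies $\lambda = 0$ with $A_{ij} = -\deg v_i$, which by the equality case of the lemma is exactly the twin condition. For $\lambda = 1$ one needs $A_{ij} = 0$; combined with $A_{ik} = A_{jk}$ for $k \neq i,j$ and the vanishing diagonal $A_{ii} = A_{jj} = 0$, this says the $i$th and $j$th rows of $A$ coincide, i.e.\ $v_i$ and $v_j$ are duplicate. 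I do not expect a real obstacle, as the argument is a mirror image of Proposition \ref{prop:g11}; the only care required is sign bookkeeping when propagating $f(v_j) = -1$ through \eqref{eq:eigenfunctions} and through the equality cases of Lemma \ref{lemma:Aij}.
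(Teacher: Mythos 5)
Your proposal follows exactly the route the paper intends: the paper's own proof of Proposition \ref{prop:g1-1} consists of the single line ``analogous to the proof of Proposition \ref{prop:g11}'', and your sign bookkeeping through \eqref{eq:eigenfunctions} carries that out correctly. Evaluating at $v_i$, $v_j$ and $v_k$ gives $(1-\lambda)=-A_{ij}/\deg v_i=-A_{ij}/\deg v_j$ and $A_{ik}=A_{jk}$, the converse is direct substitution, and your identification of $\lambda=0$ with $A_{ij}=-\deg v_i$ (hence, via the equality case of Lemma \ref{lemma:Aij}, with the twin condition) and of $\lambda=1$ with $A_{ij}=0$ (the duplicate condition) all match what the paper does for Proposition \ref{prop:g11}.

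The one step that does not go through is the bound $\lambda\leq 1$. The inequality you invoke, $\deg v_i+\deg v_j\geq -2A_{ij}$, under $\deg v_i=\deg v_j$ yields $A_{ij}\geq -\deg v_i$, i.e.\ $\lambda=1+A_{ij}/\deg v_i\geq 0$ --- the \emph{lower} bound, which is automatic anyway since $L$ is non-negative. To get $\lambda\leq 1$ you would need $A_{ij}\leq 0$, which does not follow from Lemma \ref{lemma:Aij} and is in fact false in general: for the triangle $K_3$ (as a simple graph) one has $\deg v_1=\deg v_2=2$ and $A_{12}=A_{13}=A_{23}=1$, so $f=(1,-1,0)$ satisfies both bullet conditions and is an eigenfunction with $\lambda=1+\tfrac{1}{2}=\tfrac{3}{2}>1$. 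Be aware, though, that this defect is inherited from the source: the proposition itself asserts $\lambda\leq 1$, and the paper's proof of Proposition \ref{prop:g11} makes the same wrong-direction deduction (``$A_{ij}\leq \deg v_i$ \ldots implies $\lambda=1-A_{ij}/\deg v_i\leq 1$''). So your write-up reproduces the paper's argument faithfully, flaw included; everything else in the proposal is sound.
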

	\begin{proof}
		Analogous to the proof of Proposition \ref{prop:g11}.
	\end{proof}
	A direct consequence of Proposition \ref{prop:g11} and Proposition \ref{prop:g1-1} is
	\begin{corollary} If $\Gamma$ has $\tilde{n}$ duplicate (or anti-duplicate) vertices, then $1$ is an eigenvalue with multiplicity at least $\tilde{n}-1$.\newline
		If $\Gamma$ has $\hat{n}$ twin (or anti-twin) vertices, then $0$ is an eigenvalue with multiplicity at least $\hat{n}-1$.
	\end{corollary}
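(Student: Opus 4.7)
The plan is to build explicit families of eigenfunctions of the type treated by Propositions \ref{prop:g11} and \ref{prop:g1-1}, one family for each of the four symmetry types, and then observe that these families are linearly independent by inspection of their supports.

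First I would handle the duplicate case. Suppose $v_{i_1},\ldots,v_{i_{\tilde{n}}}$ are pairwise duplicate. By definition the rows $A_{i_s,\cdot}$ and $A_{i_t,\cdot}$ agree for any $s\neq t$, which gives both $A_{i_s,k}=A_{i_t,k}$ for every $k\notin\{i_s,i_t\}$ and also $A_{i_s,i_t}=A_{i_t,i_t}=0$. In particular, the hypothesis of Proposition \ref{prop:g1-1} is satisfied for the pair $(v_{i_1},v_{i_t})$ for each $t=2,\ldots,\tilde{n}$, and the resulting eigenvalue $1+A_{i_1,i_t}/\deg v_{i_1}$ equals $1$. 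So for each $t\ge 2$ the function
\begin{equation*}
f^{(t)}(v_k):=\begin{cases} 1 & k=i_1,\\ -1 & k=i_t,\\ 0 & \text{otherwise}, \end{cases}
\end{equation*}
is an eigenfunction of $L$ for the eigenvalue $1$. The $\tilde{n}-1$ functions $f^{(2)},\ldots,f^{(\tilde{n})}$ are linearly independent because $f^{(t)}(v_{i_t})=-1$ while $f^{(s)}(v_{i_t})=0$ for $s\neq t$, so any linear dependence would force all coefficients to vanish.

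The anti-duplicate case is completely parallel: pairwise anti-duplicate vertices satisfy $A_{i_s,k}=-A_{i_t,k}$ for $k\notin\{i_s,i_t\}$ and $A_{i_s,i_t}=0$, so Proposition \ref{prop:g11} applies and gives eigenvalue $1-A_{i_1,i_t}/\deg v_{i_1}=1$ for each $t$; the analogous functions $g^{(t)}$ (with value $+1$ at both $v_{i_1}$ and $v_{i_t}$, zero elsewhere) form $\tilde{n}-1$ linearly independent eigenfunctions by the same support argument. For the twin part, pairwise twin vertices share the same coefficients, hence the same degree and $A_{i_s,i_t}=-\deg v_{i_1}$, and $A_{i_s,k}=A_{i_t,k}$ for other $k$; Proposition \ref{prop:g1-1} then yields eigenvalue $1+A_{i_1,i_t}/\deg v_{i_1}=0$, and again the $\hat{n}-1$ functions $f^{(t)}$ constructed as above are linearly independent eigenfunctions. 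The anti-twin case mirrors this using Proposition \ref{prop:g11}, giving eigenvalue $1-A_{i_1,i_t}/\deg v_{i_1}=0$.

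There is no real obstacle here: the two propositions already do the heavy lifting of producing a single eigenfunction from a pair, so the only thing to verify is that iterating the construction against a fixed base vertex $v_{i_1}$ produces the correct number of linearly independent eigenfunctions. The mild point worth flagging is that one should anchor all $\tilde{n}-1$ (respectively $\hat{n}-1$) eigenfunctions to the \emph{same} vertex $v_{i_1}$, rather than using arbitrary pairs, so that linear independence is immediate from the disjoint locations of the $-1$ entries.
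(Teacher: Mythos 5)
Your proof is correct and follows exactly the route the paper intends: the paper states this corollary as a direct consequence of Propositions \ref{prop:g11} and \ref{prop:g1-1} without spelling out the details, and your argument (anchoring all pairs to a fixed base vertex and reading off linear independence from the supports) is precisely that omitted verification.
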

	
	We conclude with the example of a hypergraph that satisfies Proposition \ref{prop:g1-1} for two vertices that are neither twin nor duplicate, but nevertheless are strongly symmetric.
	\begin{example}\label{ex:2}
		\begin{figure}
			\centering
			\includegraphics[width=6cm]{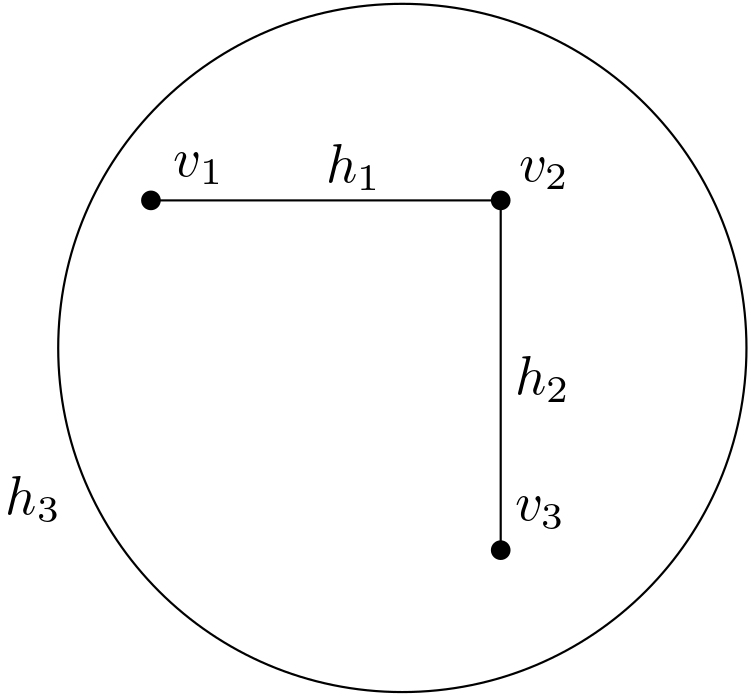}
			\caption{The hypergraph in Example \ref{fig:ex2}.}
			\label{fig:ex2}
		\end{figure}
		Let $\Gamma=(V,H,\mathcal{C})$ be such that (Figure \ref{fig:ex2}):
		\begin{itemize}
			\item[-] $V=\{v_1,v_2,v_3\}$ and $H=\{h_1,h_2\}$;
			\item[-] $h_1=\{v_1,v_2\}$, $h_2=\{v_2,v_3\}$ and $h_3=\{v_1,v_2,v_3\}$;
			\item[-] All non-zero coefficients are equal $1$.
		\end{itemize}Then, $v_1$ and $v_3$ are neither twin nor duplicate vertices, but they satisfy the conditions of Proposition \ref{prop:g1-1} since $\deg v_1=\deg v_3=2$ and $A_{12}=A_{32}=-2,A_{13}=-1$. By Proposition \ref{prop:g1-1}, this symmetry is reflected by the eigenvalue
		\begin{equation*}
		\lambda=1+\frac{A_{13}}{\deg v_1}=\frac{1}{2}. 
		\end{equation*}In particular, in this case
		\begin{align*}
		L=&\mathrm{Id}-D^{-1}A\\
		&=\begin{pmatrix}
		\begin{matrix}
		1&0&0\\
		0&1&0\\
		0&0&1
		\end{matrix}
		\end{pmatrix}-\begin{pmatrix}
		\begin{matrix}
		1/2&0&0\\
		0&1/3&0\\
		0&0&1/2
		\end{matrix}
		\end{pmatrix}\begin{pmatrix}
		\begin{matrix}
		0&-2&-1\\
		-2&0&-2\\
		-1&-2&0
		\end{matrix}
		\end{pmatrix}\\
		&=\begin{pmatrix}
		\begin{matrix}
		1&1&1/2\\
		2/3&1&2/3\\
		1/2&1&1
		\end{matrix}
		\end{pmatrix}.
		\end{align*}Its eigenvalues are
		\begin{equation*}
		\lambda_1=\frac{15-\sqrt{201}}{12}\cong 0.07,\quad \lambda_2=\frac{1}{2} \quad \text{and}\quad  \lambda_3=\frac{15+\sqrt{201}}{12}\cong 2.43.
		\end{equation*}

	\end{example}

	\section{General symmetries}\label{section: general symm}
	
	\begin{definition}
		An \emph{automorphism} of the hypergraph $\Gamma=(V,H,\mathcal{C})$
		consists of bijections $\sigma:V\to V$ and $\sigma:H\to H$ such that
		$\sigma(v)\in \sigma(h)$ iff $v\in h$ and for every $v\in V$ either
		$C_{\sigma(v),\sigma(h)}=C_{v,h}$ for all $h\in H$  or $=-C_{v,h}$ for
		all $h$. We may then put
		$s(\sigma)(v)=1$ in the first and $=-1$ in the second case. When
		$\sigma$ is clear, we simply write $s(v)$ in place of $s(\sigma)(v)$.\\
		For a such an automorphism $\sigma$ and $f:V\rightarrow\mathbb{R}$,
		we put $\sigma_\ast f(v)=s(v)f(\sigma(v))$. 
	\end{definition}
	
	As we shall now verify, the Laplace operator commutes with automorphisms. This
	is trivial if $s(\sigma)(v)=1$ for all $v$. In the general case, we need to be
	careful with the signs. 
	\begin{lemma}\label{commute}
		If $\sigma$ is an automorphism of the hypergraph
		$\Gamma=(V,H,\mathcal{C})$, then
		\begin{equation}
		\label{sym1}
		L(\sigma_\ast f)(v)=\sigma_\ast (Lf)(v)
		\end{equation}
		for all $v\in V$ and $f:V\rightarrow\mathbb{R}$. 
	\end{lemma}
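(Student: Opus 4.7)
The plan is to expand both sides of (\ref{sym1}) using the explicit formula for $L$ and the definition $\sigma_\ast f(v)=s(v)f(\sigma(v))$, and then to reindex the right-hand side using that $\sigma$ is a bijection on both $V$ and $H$. A small preliminary observation I would make first is that the degree is automorphism-invariant: since for each $v$ either $C_{\sigma(v),\sigma(h)}=C_{v,h}$ for all $h$ or $C_{\sigma(v),\sigma(h)}=-C_{v,h}$ for all $h$, the squares agree, and summing over $h$ (and using that $\sigma$ permutes $H$) gives $\deg\sigma(v)=\deg v$. This is essential to identify the denominators on the two sides.

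Next I would write the left-hand side as
\[
L(\sigma_\ast f)(v)=\frac{1}{\deg v}\sum_{h\in H}\sum_{v'\in V}C_{v,h}\,C_{v',h}\,s(v')\,f(\sigma(v')),
\]
and the right-hand side as
\[
\sigma_\ast(Lf)(v)=\frac{s(v)}{\deg \sigma(v)}\sum_{h'\in H}\sum_{v''\in V}C_{\sigma(v),h'}\,C_{v'',h'}\,f(v'').
\]
In the right-hand side I would now substitute $h'=\sigma(h)$ and $v''=\sigma(v')$, which is legal because $\sigma$ is bijective on both sets, and simultaneously use the degree equality to replace $\deg\sigma(v)$ by $\deg v$. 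The product of coefficients becomes $C_{\sigma(v),\sigma(h)}\,C_{\sigma(v'),\sigma(h)}$, and applying the automorphism relation $C_{\sigma(u),\sigma(h)}=s(u)C_{u,h}$ to each factor replaces this product by $s(v)\,s(v')\,C_{v,h}\,C_{v',h}$. The factor $s(v)^2=1$ outside cancels, and what remains is exactly the left-hand side computed above.

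The step that deserves the most care is the sign bookkeeping in the middle calculation: the key conceptual input is that the definition of automorphism forces a single sign $s(v)$ to work uniformly across all hyperedges containing $v$, so the factor $s(v)s(v')$ can be pulled cleanly out of the $h$-sum; if $s$ were allowed to depend on $h$, the argument would collapse. I therefore expect the main (and essentially the only) obstacle to be stating the change of variables cleanly enough that the reader sees why the two $s(v)$'s out front and inside multiply to $1$, while the $s(v')$ inside matches exactly the $s(v')$ that defines $\sigma_\ast f(v')$. Once these signs are tracked, the identity drops out immediately.
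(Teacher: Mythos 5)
Your proposal is correct and follows essentially the same route as the paper: expand both sides via the explicit formula for $L$, reindex with $h'=\sigma(h)$, $v''=\sigma(v')$, and cancel the signs using $C_{\sigma(u),\sigma(h)}=s(u)C_{u,h}$ and $s(v)^2=1$. Your explicit preliminary observation that $\deg\sigma(v)=\deg v$ is a welcome addition, since the paper uses this identification of denominators silently.
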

	\begin{proof}
		\begin{eqnarray*}
			L(\sigma_\ast f)(v)&=&   s(v)f(\sigma(v))+ \frac{1}{\deg v}\cdot
			\biggl(\sum_{h\in H}\sum_{v'\in
				V\setminus\{v\}}C_{v,h}\cdot C_{v',h}\cdot
			s(v')f(\sigma(v'))\biggr)\\
			&=& s(v)f(\sigma(v))+ \frac{1}{\deg \sigma(v)}\cdot
			\biggl(\sum_{h\in H}\sum_{v'\in
				V\setminus\{v\}}C_{\sigma(v),\sigma(h)}\cdot C_{\sigma(v'),\sigma(h)}\cdot
			f(\sigma(v'))\biggr)\\
			&=& s(v)f(\sigma(v))+ \frac{1}{\deg \sigma(v)}\cdot
			\biggl(\sum_{h\in H}\sum_{v'\in
				V\setminus\{v\}} \sigma(v)C_{v,h}\cdot
			C_{\sigma(v'),\sigma(h)}\cdot
			f(\sigma(v'))\biggr)\\
			&=& \sigma_\ast (Lf)(v).
		\end{eqnarray*}
	\end{proof}
	
	Thus, the Laplacian $L$ commutes with hypergraph automorphisms $\sigma$.
	We can use Lemma \ref{commute} to decompose the spectrum of $L$.
	Let $\tau$ be an automorphism of the hypergraph
	$\Gamma=(V,H,\mathcal{C})$  with
	\begin{equation}\label{id}
	\tau^2=\mathrm{id}.
	\end{equation}
	Then $\tau$ has two possible eigenvalues, $\pm 1$, on the space of functions
	$f:V\rightarrow\mathbb{R}$, and $L$ leaves those two eigenspaces $L_\pm$
	invariant. Also, $V=V_0+V_1$ where $\tau(v_0)=v_0$ precisely if $v_0\in
	V_0$. That is, $V_0$ is the set of those vertices that are fixed by
	$\tau$. Moreover, we can write $V_1=V'\cup V''$ where $V',V''$ are disjoint and
	$\tau(V')=V''$. Since also $\tau(V'')=V'$ because of \eqref{id}, $V'$ and
	$V''$ play symmetric roles.
	
	W.l.o.g., we assume that $V'$ (and hence also $V''$) is
	connected, as otherwise we can rearrange the decomposition of $V_1$ and/or
	write $\tau$ as the composition of several such automorphisms.
	\begin{lemma}\label{dup}
		In this situation, there is a $|V'|$-dimensional space of functions
		$f:V\rightarrow\mathbb{R}$ consisting of eigenfunctions of $L$ that vanish
		on $V_0$ and that are antisymmetric on $V'$ and $V''$, that is,
		$f(v'')=-s(v')f(v')$ if $v''=\tau(v')\in V''$ for $v'\in V'$, and a remaining $(|V'|+|V_0|)$-dimensional space of eigenfunctions
		that are symmetric on $V'$ and $V''$, that is, $f(v'')=s(v')f(v')$. 
	\end{lemma}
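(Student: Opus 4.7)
The plan is to realize the two claimed subspaces as the $\pm 1$ eigenspaces of the linear involution $\tau_\ast$ acting on $\mathbb{R}^V$, use Lemma \ref{commute} to conclude that each is $L$-invariant, and then read off the dimensions by prescribing values freely on $V_0$ and $V'$. The first thing I will verify is that $\tau_\ast^2=\mathrm{id}$: applying $\tau^2=\mathrm{id}$ to the defining relation $C_{\tau(v),\tau(h)}=s(v)\,C_{v,h}$ gives $C_{v,h}=s(\tau(v))\,s(v)\,C_{v,h}$, hence the sign-cocycle identity $s(\tau(v))=s(v)$. A one-line computation then yields $\tau_\ast(\tau_\ast f)(v)=s(v)\,s(\tau(v))\,f(\tau^2(v))=f(v)$, so $\mathbb{R}^V$ decomposes as the direct sum of the $+1$ and $-1$ eigenspaces of $\tau_\ast$, each of which is $L$-invariant by Lemma \ref{commute} and therefore contains a basis of $L$-eigenfunctions.

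Next I would describe the two eigenspaces explicitly. A function $f$ lies in the $+1$ eigenspace iff $s(v)\,f(\tau(v))=f(v)$ for every $v\in V$. For a pair $v'\in V'$, $v''=\tau(v')$, using $s(v')^2=1$ this reads $f(v'')=s(v')\,f(v')$, and consistency on $V''$ follows automatically from $s(v'')=s(v')$. On $V_0$ the condition becomes $(s(v_0)-1)\,f(v_0)=0$, which under the implicit convention $s\equiv 1$ on $V_0$ (the natural case in which $\tau$ fixes the local incidence structure at each fixed vertex rather than flipping it) is vacuous, so $f$ is free on $V_0\cup V'$ and the symmetric eigenspace has dimension $|V_0|+|V'|$. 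Reversing the sign, the $-1$ eigenspace consists of $f$ satisfying $f(v'')=-s(v')\,f(v')$ together with $f\equiv 0$ on $V_0$, which gives dimension $|V'|$ and accounts for the remaining $|V|=|V_0|+2|V'|$ dimensions.

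The only step I expect to require any thought is the cocycle identity $s(\tau(v))=s(v)$, which is what makes $\tau_\ast$ a genuine involution and underpins the whole splitting; everything else is elementary linear algebra on the constraint equations, with invariance under $L$ handed to us directly by Lemma \ref{commute}. The connectedness assumption on $V'$ is what prevents a further decomposition into independent $\tau$-orbits, and so guarantees that the symmetric and antisymmetric pieces produced here are the correct indecomposable summands for the given automorphism.
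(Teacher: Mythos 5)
Your proposal is correct and follows essentially the same route as the paper's own (very terse) proof: decompose $\mathbb{R}^V$ into the $\pm 1$ eigenspaces of $\tau_\ast$, invoke Lemma \ref{commute} for $L$-invariance, and count dimensions from $V=V_0\cup V'\cup V''$ with $|V''|=|V'|$. You additionally verify the cocycle identity $s(\tau(v))=s(v)$ and correctly flag that the stated dimension count requires $s\equiv 1$ on $V_0$ — a point the paper leaves implicit — which makes your write-up more careful than the original without changing the argument.
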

	\begin{proof}
		The first class of functions are those that are eigenfunctions of $\tau$ for
		the eigenvalue $-1$, where the second class has eigenvalue $1$. By Lemma
		\ref{commute}, these are unions of eigenspaces of $L$. Since
		$|V''|=|V'|$ and $V=V_0\cup V' \cup V''$, this generates the space of all
		functions on $V$. 
	\end{proof}
	\begin{definition}
		Let  $\Gamma=(V,H,\mathcal{C})$ be a hypergraph. An \emph{ induced subhypergraph}
		$\hat{\Gamma}$ has  some nonempty vertex set $\hat{V}\subset V$ and a hyperedge
		set $\hat{H}\subset H$ such that
		any two $v_1,v_2 \in \hat{V}$ are contained in  a hyperedge $h\in \hat{H}$
		whenever they are contained in $h$ in $\Gamma$, and the coefficient
		$C_{v,h}$ is then taken from $\Gamma$. An induced subhypergraph is also
		called a \emph{ motif} of $\Gamma$. 
	\end{definition}
	Let $\hat{\Gamma}$ be a motif in  $\Gamma$. We then have the induced Laplacian
	\begin{equation}
	\label{induced}
	L_{\Gamma,\hat{\Gamma}}f(v)=f(v)+\frac{1}{\deg_\Gamma v}\cdot \biggl(\sum_{h\in \hat{H}}\sum_{v'\in \hat{V}\setminus\{v\}}C_{v,h}\cdot C_{v',h}\cdot f(v')\biggr)
	\end{equation}
	where $\deg_\Gamma v$ denotes the degree of $v$ in $\Gamma$.
	\begin{definition}
		We say that the motif $\Gamma'$ with vertex set $V'$ is a \emph{duplicated motif} if $V'$ and $V''$ are disconnected,
		that is, when there is no hyperedge containing elements from both $V'$ and
		$V''$. \\
		We say that $\Gamma'$ and $\Gamma''$ with vertex sets $V'$ and $V''$ are \emph{(anti)twin motives} if for every $h\in H$ we have
		that $v'\in h$ iff $v''=\tau(v')\in h$ and $C_{v',h}=C_{v'',h}$
		($=-C_{v'',h}$) for every $v'\in V'$. 
	\end{definition}
	\begin{lemma}\label{neigh}
		Let $\Gamma'$ be a duplicated motif in $\Gamma$, and let $v_o\in V_o$ be a
		neighbor of some $v'\in V'$. Then $v_o$ is also a neighbor of
		$v''=\tau(v')\in V''$.
	\end{lemma}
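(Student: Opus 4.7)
The plan is straightforward: transport the hyperedge witnessing that $v_o$ and $v'$ are neighbors via the automorphism $\tau$. First I would unpack the definition of neighbor to get a hyperedge $h\in H$ with $\{v_o,v'\}\subseteq h$. Since $\tau:H\to H$ is a bijection with the defining property that $\sigma(v)\in\sigma(h)$ iff $v\in h$, applying $\tau$ yields $\tau(v_o),\,\tau(v')\in\tau(h)$, and $\tau(h)$ is itself a hyperedge of $\Gamma$.

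Next I would use the specific structural information about the vertices in play: $v_o\in V_0$ means $\tau(v_o)=v_o$ by definition of the fixed-point set, while $\tau(v')=v''$ by the definition of the pairing between $V'$ and $V''$. Substituting these into the membership relation above gives $\{v_o,v''\}\subseteq \tau(h)\in H$, which is precisely the statement that $v_o$ is a neighbor of $v''$.

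There is no serious obstacle here; the only point worth remarking on is the role of the duplicated motif hypothesis. The core of the argument (applying $\tau$ to a witnessing hyperedge) does not actually require that $V'$ and $V''$ be disconnected. However, the duplicated motif hypothesis guarantees that $h$ itself does not contain $v''$ (since $v'\in V'\cap h$ forces $h$ to be disjoint from $V''$), hence $\tau(h)\neq h$, so the hyperedge certifying the new adjacency of $v_o$ with $v''$ is genuinely a different hyperedge from the one certifying adjacency with $v'$. This is the statement needed for the subsequent spectral arguments, so I would conclude by noting this distinction explicitly.
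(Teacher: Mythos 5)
Your proof is correct and follows the same route as the paper: apply $\tau$ to a hyperedge containing $v_o$ and $v'$, and use $\tau(v_o)=v_o$ together with the incidence-preserving property of automorphisms to conclude that $\tau(h)$ witnesses the adjacency of $v_o$ and $v''$. Your additional observation that the duplicated-motif hypothesis forces $\tau(h)\neq h$ is accurate but not part of the paper's (or the lemma's) claim.
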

	\begin{proof}
		Since $v_0\in V_0$ is fixed by the automorphism $\tau$, and since $\tau$
		maps the hyperedge $h$ containing $v_0$ and $v'$ onto some hyperedge
		$\tau(h)$ containing $v''=\tau(v')$ and $v_0=\tau(v_0)$, the claim follows. 
	\end{proof}
	\begin{lemma}
		Let $\Gamma'$ be a  duplicated motif in $\Gamma$. Then we find a basis of
		eigenfunctions of the Laplacian $L$ of $\Gamma$ of functions $f$ satisfying either
		\begin{enumerate}
			\item
			\begin{equation}
			\label{sym2}
			L_{\Gamma,\Gamma'}f(v)=
			\begin{cases}
			\lambda f(v) & \text{ for } v\in V' \text{ (and also for } v\in V'')\\
			-s(v)f(\tau(v)) &  \text{  for } v\in V''\\
			0 &  \text{ for } v\in V_0
			\end{cases}
			\end{equation}
			\item or
			\begin{equation}
			\label{sym3}
			f(\tau(v))=s(v) f(v) \text{ for } v\in V'.
			\end{equation}
			The latter eigenfunctions are those of the hypergraph $\Gamma^\tau$ obtained
			as the quotient of $\Gamma$ by $\tau$, that is, the hypergraph with vertex
			set $V_0\cup V'$ and all hyperedges induced by $\Gamma$ and the coefficients $C^\tau_{v,h}=\sqrt{2}C_{v,h}$.
		\end{enumerate}
	\end{lemma}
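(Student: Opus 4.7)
The plan is to build on Lemma \ref{dup} and Lemma \ref{commute}, which have already decomposed the space of functions on $V$ into $\tau$-symmetric and $\tau$-antisymmetric pieces invariant under $L$. On each piece, the eigenvalue equation $Lf=\lambda f$ should reduce to an equation on a smaller hypergraph; the duplicated-motif hypothesis, namely that no hyperedge meets both $V'$ and $V''$, is exactly what makes both reductions clean.

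For the antisymmetric case, I would take an antisymmetric eigenfunction $f$, which by Lemma \ref{dup} satisfies $f|_{V_0}\equiv 0$ and $f(v'')=-s(v')f(v')$ for $v''=\tau(v')\in V''$. Writing $Lf(v)=\lambda f(v)$ for $v\in V'$, I would split the inner sum over $w\in V_0$, $w\in V'\setminus\{v\}$ and $w\in V''$. The $V_0$ terms vanish because $f|_{V_0}\equiv 0$; the $V''$ terms vanish because the duplicated-motif hypothesis forces $C_{v,h}C_{w,h}=0$ whenever $v\in V'$ and $w\in V''$. What remains is precisely $L_{\Gamma,\Gamma'}f(v)$, giving the first line of \eqref{sym2}; the other two lines of \eqref{sym2} are just the antisymmetry and vanishing conditions.

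For the symmetric case, I would pass to the restriction $\tilde f:=f|_{V_0\cup V'}$. Since the symmetry relation $f(\tau(v))=s(v)f(v)$ recovers $f$ from $\tilde f$, this assignment is a bijection from symmetric functions on $V$ onto all functions on $V_0\cup V'$. The goal is to show that $\tilde f$ is a $\lambda$-eigenfunction of $L^{\Gamma^\tau}$. For $v\in V'$ the argument is essentially as above: by duplicated-motif, only hyperedges contained in $V_0\cup V'$ contain $v$, so the sum runs over $w\in(V_0\cup V')\setminus\{v\}$. The $\sqrt 2$ in $C^\tau_{v,h}$ enters quadratically in $\deg^{\Gamma^\tau}v$ and bilinearly in the numerator, so it cancels and $L^{\Gamma^\tau}\tilde f(v)=Lf(v)=\lambda\tilde f(v)$.

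The delicate step, which I expect to be the main obstacle, is the computation at a fixed vertex $v_0\in V_0$, where hyperedges meeting $V''$ genuinely contribute to $Lf(v_0)$. Here I would use the automorphism relation $C_{\tau(w),\tau(h)}=s(w)C_{w,h}$, the identity $s(\tau(v))=s(v)$ forced by $\tau_\ast^2=\mathrm{id}$, and the symmetry $f(\tau(w'))=s(w')f(w')$, to check that the contribution to $Lf(v_0)$ of each hyperedge $h\subset V_0\cup V''$ is identical to that of its $\tau$-partner $\tau(h)\subset V_0\cup V'$. Consequently, the combined $V'$- and $V''$-contributions at $v_0$ equal twice the $V'$-contribution alone. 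The $\sqrt 2$ factor in the coefficients of $\Gamma^\tau$ exactly absorbs this doubling, since the bilinear numerator gains a factor $2$ and $\deg^{\Gamma^\tau}v_0=2\deg_\Gamma v_0$. Putting everything together one obtains $L^{\Gamma^\tau}\tilde f(v_0)=Lf(v_0)=\lambda\tilde f(v_0)$, completing the identification of the symmetric eigenfunctions of $L$ with the eigenfunctions of $L^{\Gamma^\tau}$.
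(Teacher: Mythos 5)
Your treatment of the antisymmetric case is correct and is essentially the paper's own argument: antisymmetric functions vanish on $V_0$, the duplicated-motif hypothesis kills every product $C_{v,h}\cdot C_{w,h}$ with $v\in V'$ and $w\in V''$, and at a fixed vertex the contributions of $v'$ and $\tau(v')$ cancel, so the eigenvalue equation collapses to the induced Laplacian $L_{\Gamma,\Gamma'}$ on $V'$ as in \eqref{sym2}. That is in fact all the paper verifies; the identification of the symmetric eigenfunctions with those of $\Gamma^{\tau}$ is asserted there without computation, so you are attempting strictly more than the published proof.

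It is precisely in that extra step that your argument does not close. At $v_0\in V_0$ write $Lf(v_0)-f(v_0)=\frac{1}{\deg_\Gamma v_0}\left(S_0+S'+S''\right)$, where $S_0$, $S'$, $S''$ collect the contributions of $V_0\setminus\{v_0\}$, $V'$ and $V''$. You correctly get $S''=S'$ for symmetric $f$, hence $Lf(v_0)-f(v_0)=\frac{S_0+2S'}{\deg_\Gamma v_0}$. But the uniform rescaling $C\mapsto\sqrt{2}\,C$ multiplies \emph{every} bilinear term and the degree by the same factor $2$, so with your own accounting $\deg^{\Gamma^\tau}v_0=2\deg_\Gamma v_0$ you obtain $L^{\Gamma^\tau}\tilde f(v_0)-f(v_0)=\frac{2S_0+2S'}{2\deg_\Gamma v_0}=\frac{S_0+S'}{\deg_\Gamma v_0}$: doubling numerator and denominator simultaneously absorbs nothing, and the two expressions differ by $S'/\deg_\Gamma v_0$, which is nonzero whenever $v_0$ has a neighbour in $V'$. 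The doubling you need is asymmetric --- the $V'$-terms must be counted twice while the terms coming from hyperedges contained in $V_0$ are counted once --- and no uniform rescaling of all coefficients can produce that. To repair the step you must fix the hyperedge set of $\Gamma^\tau$ precisely: each $\tau$-orbit $\{h,\tau(h)\}$ meeting $V'\cup V''$ should descend to a \emph{single} hyperedge with the $\sqrt{2}$-rescaled coefficients (so that $\deg^{\Gamma^\tau}v_0=\deg_\Gamma v_0$ on those orbits and the numerator picks up the wanted factor $2$ on the $V'$-terms), while hyperedges fixed by $\tau$ inside $V_0$ must be kept with their original coefficients; as written, the identity $L^{\Gamma^\tau}\tilde f(v_0)=Lf(v_0)$ fails.
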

	\begin{proof}If $v_0$ in  $V_0$ and $f(v_0)=0$ and if
		$f$ is antisymmetric, then also $Lf(v_0)=0$, because the contributions from
		its neighbors $v'$ and $v''=\tau(v')$ (see Lemma \ref{neigh}) cancel each
		other in $Lf(v_0)$.
		The result then  follows from Lemma \ref{dup}, because under our assumptions, a neighbor $w$
		of $v\in V'$ is contained either in $V'$ or in $V_0$, in which case for an
		antisymmetric $f$, $f(w)=0$, and therefore, we can restrict the computation
		in \eqref{sym2} to the induced Laplacian, that is, consider only the
		vertices in $V'$. 
	\end{proof}
	Let us consider some {\bf Examples:}
	\begin{enumerate}
		\item Let $v'$ be a duplicated vertex, that is, $V'$ consists only of that
		single vertex. Then the induced Laplacian $L_{\Gamma,\Gamma'}f(v')=f(v')$, and
		therefore, the function $f$ with $f(v')=1, f(v'')=-1, f(v)=0$ for all
		other vertices is an eigenfunction with $\lambda=1$. This was shown in
		\cite{duplicationgraphs} and 
		Prop. \ref{prop:g1-1}.
		\item Consider a graph, and let $V'$ again consist of a single vertex
		$v'$, but assume that $v'$ and $v''=\tau(v')$ are connected by an
		edge. Then \eqref{sym2} becomes
		\begin{equation*}
		f(v')-\frac{1}{\deg_\Gamma v'}f(v'')=\lambda f(v'),
		\end{equation*}
		that is, since $f(v'')=-f(v')$,
		\begin{equation*}
		\lambda =1+\frac{1}{\deg_\Gamma v'}.
		\end{equation*}
		
		\item Let $e=(v_1',v_2')$ be a duplicated edge in a graph. Then we can determine two
		eigenvalues and eigenfunctions of $L$ by solving
		\begin{eqnarray*}
			f(v_1')-\frac{1}{\deg_\Gamma v_1'}f(v_2')&=&\lambda f(v_1')\\
			f(v_2')-\frac{1}{\deg_\Gamma v_2'}f(v_1')&=&\lambda f(v_2')\\
			f(v)=0 & &\text{ for all other }v.
		\end{eqnarray*}
		This yields \cite{duplicationgraphs}
		\begin{equation*}
		\lambda =1\pm \frac{1}{\sqrt{\deg_\Gamma v_1'\deg_\Gamma v_2'}}.
		\end{equation*}
	\end{enumerate}

	\textbf{Acknowledgments.} RM was supported by The Alan Turing Institute under the EPSRC grant EP/N510129/1.

	\bibliographystyle{unsrt}
	\bibliography{Coefficients04.01.21}
\end{document}